\newtheorem{thm}{Theorem}[section]
\newtheorem{prop}[thm]{Proposition}
\newtheorem{lem}[thm]{Lemma}
\newtheorem{cor}[thm]{Corollary}
\newtheorem*{defn}{Definition}
\newtheorem{rem}[thm]{Remark}
\newtheorem{ex}[thm]{Example}
\newtheorem*{nota}{Notation}
\DeclareMathOperator\Ad{Ad}
\DeclareMathOperator\Isom{Isom}
\DeclareMathOperator\Ker{Ker}
\DeclareMathOperator\tr{tr}
\DeclareMathOperator\I{Id}
\DeclareMathOperator\h{\mathcal H}
\DeclareMathOperator\B{\mathcal B}
\DeclareMathOperator\oo{{\mathcal O}}
\DeclareMathOperator\M{{\mathfrak M}}
\DeclareMathOperator\C{\mathbb{C}}
\DeclareMathOperator\R{\mathbb{R}}
\DeclareMathOperator\N{\mathbb{N}}
\DeclareMathOperator\Sim{sim}
\DeclareMathOperator\diam{diam}
\DeclareMathOperator\dist{dist}
\DeclareMathOperator\id{id}
\title{Geometric aspects of similarity problems}
\date{\today}
\author{Mart\'in Miglioli}\email{martin.miglioli@gmail.com}\address{IAM-CONICET. Saavedra 15, Piso 3, (1083) Buenos Aires, Argentina}
\author{Peter Schlicht}\email{peter.schlicht@epfl.ch}\address{EPFL SB MATHGEOM EGG - MA C3 584 (Batiment MA) - CH 1015 Lausanne - Switzerland}
\begin{document}
\begin{abstract}
This article presents a geometric approach to some similarity problems involving metric arguments in the non-positively curved space of positive invertible operators of an operator algebra and the canonical isometric action by invertible elements on the cone given by $g\cdot a=gag^*$.\par 
Through this approach, we extend and put into a geometric framework results by G. Pisier and partially answer a question by Andruchow, Corach and Stojanoff about minimality properties of canonical unitarizers.
\end{abstract}
\maketitle
\tableofcontents




\section{Introduction}
In this article, we address similarity problems as presented for example by G. Pisier in \cite{pisier4} or by N. Ozawa in \cite{osawa}. Generally, those problems ask, when some continuous homorphims (e.g. from a group or a $C^*$-algebra) into the algebra of bounded operators on a Hilbert space are conjugate to special homomorphisms.\par 
In the case of groups, this is Dixmier's unitarizability question: "For which groups is every uniformly bounded linear representation on a Hilbert space similar to a unitary representation". Those groups are called unitarizable. Even though this question is still open in full generality, some partial answers are known: independently Day \cite{day}, Dixmier \cite{dix} and Nakamura and Takeda \cite{naka} proved that amenable groups are unitarizable and the non-unitarizable groups were found (e.g. \cite{ehrenp}, \cite{epm}, \cite{pytls}). This led to asking whether every unitarizable group is amenable.\par 
The similarity problem in the case of $C^*$-algebras is known as Kadison's problem and partial answers were given by Haagerup, who proved in \cite{haag} that uniformly bounded cyclic representations as well as completely bounded homomorphisms are similar to \\$*$-homomorphisms.\par 
In the present article, we connect those similarity problems through induced actions on the cone of positive invertible operators with a fixed point property for actions by isometries.\par 
After introducing some terminology and background, such as the metric and geodesic structure on the cone $P$ of positive invertible elements of a $C^*$-algebra in Chapter 2, we will connect analytic data coming from a uniformly bounded map (such as its uniform bound) with geometric quantities like diameter of orbits and distances of those to fixed point sets and analyze their properties in Chapter 3. 
\par 
Chapter 4 is then devoted to putting some interpolation results of G. Pisier's studies on similarity problems into this geometric set-up. At the end of this chapter, we will restrict the focus to some subalgebras of $\B(\h)$, which allow for a CAT(0)-metric upon the same geodesic structure of their cone of positive operators to prove unitarizability results in those cases.\par 
In Chapter 5, we give a partial answer to a problem formulated by Andruchow, Corach and Stojanoff in \cites{andcorstoj1,andcorstoj2} about the minimality properties of the canonical unitarizers of some representations.\par

\section{Preliminaries}\label{prel}
\subsection{Geometry on the cone of positive invertibles}\label{geopos}

In this subsection, we recall some geometric facts about the cone $P$ of positive invertible operators in a unital $C^*$-algebra $A$. Throughout this article, we will mainly use the metric and the geodesic structure of $P$, but mention the differential geometric background for the sake of completeness.\par 
As an open subset of the real space $A_s$ of self-adjoint elements in $A$, $P$ naturally inherits the structure of a Banach manifold. It also carries a canonical symmetric space structure with Cartan symmetries given by $\sigma_a(b)=ab^{-1}a$ for $a,b\in P$, see \cite{neeb}*{Example 3.9}. The corresponding exponential map $\exp_{\id}:T_{\id}P\simeq A_s\to P$ at the identity element $\id\in P$ is given by the ordinary exponential with inverse $\log:P\to A_s$ and the corresponding geodesics between any two points $a,b\in P$ are given by
\begin{align}
\gamma_{a,b}(t)=a^{\frac12}(a^{-\frac12}ba^{-\frac12})^ta^{\frac12}.\label{geodesic}
\end{align}\par
Note that this space is isomorphic to the quotient $G/U$, where $G$ and $U$ are the groups of invertible and unitary elements in $A$.\par
By defining
\begin{align*}
d(a,b)=\left\Vert\log\left(a^{-\tfrac12}ba^{-\tfrac12}\right)\right\Vert,
\end{align*}
$P$ is turned into a metric space. This metric structure comes from a Finslerian length structure on $P$ (defined through identifying $A_s$ together with the restriction of the norm on $A$ to $A_s$ with the tangential space of $P$ at $\id$ and using the Cartan symmetries). With respect to this Finsler structure the geodesics between any two points $a,b\in P$ are length minimizing, but they are not unique as shortest continuous paths between $a$ and $b$ (since the operator norm is not uniformly convex in general). See \cite{neeb}*{Example 6.7} and \cite{correcht} for details, or \cite{schlicht}*{Chapter 4} for explicit calculations avoiding the use of Finslerian length structures.\par
We can also regard a positive invertible $a\in P$ as a Hilbertian norm $\|\cdot\|_a$ compatible with the original norm $\|\cdot\|$ and defined as  $\|\xi\|_a=\|a\xi\|=\langle a^2\xi,\xi\rangle^{\frac12}$ for $\xi\in\h$. A Banach-Mazur type distance $\delta$ on the set of norms compatible with the Hilbertian norm of $\h$  can be defined as 
\begin{align*}\delta(||\cdot||,|||\cdot|||)=\sup_{\xi\neq 0}\left|{\log\frac{||\xi||}{ |||\xi|||}}\right|.\end{align*}
It was proved in \cite{acms}*{Proposition 2.2} that $d(a,b)=\delta(\|\cdot\|_a,\|\cdot\|_b)$.
In \cite{cpr}*{Theorem 1} the "exponential metric increasing property", which states that for $a\in P$ and $X,Y\in T_aP$ the inequality $\|X-Y\|_a\leq d(\exp_a(X),\exp_a(Y))$ holds, was shown to be equivalent to Segal's inequality, which states that $\|e^{X+Y}\|\leq\|e^{\frac{X}{2}}e^Ye^{\frac{X}{2}}\|$ for self-adjoint operators $X$ and $Y$.
\par
Using the exponential metric increasing property the convexity of the distance along geodesics can be derived, see \cite{correcht}.
\begin{prop}\label{geoconv}
For two geodesics $\alpha$ and $\beta$ in $P$ the map $[0,1] \to P$, $t\mapsto d(\alpha(t),\beta(t))$ is convex, i.e.
\begin{align*}
d(\alpha(t),\beta(t))\leq t\cdot d(\alpha(1),\beta(1))+(1-t)d(\alpha(0),\beta(0))
\end{align*}

\end{prop}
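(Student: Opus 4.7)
The plan is to establish the proposition in two stages: first, reduce the convexity claim to a midpoint inequality; second, derive that midpoint inequality from the exponential metric increasing property (EMI) and Segal's inequality.

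\textbf{Reduction to the midpoint inequality.} By formula~(\ref{geodesic}) the restriction of any geodesic in $P$ to a subinterval $[s,t]\subseteq[0,1]$ is, after affine reparametrization, again a geodesic of $P$. I would therefore first argue that the general convexity claim is equivalent, via continuity of $d$ in $t$, to the single midpoint inequality
\begin{align*}
d(\alpha(1/2),\beta(1/2))\leq \tfrac{1}{2}\bigl(d(\alpha(0),\beta(0))+d(\alpha(1),\beta(1))\bigr).
\end{align*}
Applying this inequality successively to the pairs $(\alpha|_{[0,1/2]},\beta|_{[0,1/2]})$ and $(\alpha|_{[1/2,1]},\beta|_{[1/2,1]})$ (and then recursively to their halves) produces the convex bound at every dyadic rational of $[0,1]$; continuity then extends this to every $t\in[0,1]$.

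\textbf{Midpoint inequality via EMI.} The group $G$ of invertibles acts on $P$ by isometries through $g\cdot a=gag^*$, and this action carries geodesics to geodesics, so I would normalize $\alpha(1/2)=\id$ by taking $g=\alpha(1/2)^{-1/2}$. Then by~(\ref{geodesic}) one has $\alpha(0)=e^{-X/2}$ and $\alpha(1)=e^{X/2}$ for some self-adjoint $X$, while $m:=\beta(1/2)$ is related to $\beta(0)$ and $\beta(1)$ by the Cartan symmetry $\sigma_m(\beta(0))=\beta(1)$, i.e.\ $\beta(0)$ and $\beta(1)$ are antipodal about $m$ in the symmetric-space sense. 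Applying EMI to the appropriate pairs of tangent vectors at $\id$, invoking this antipodal relation, and using the triangle inequality for the Banach norm on $A_s$ together with an averaging step, one obtains an upper bound on $d(\id,m)=\|\log m\|$ by the mean of $d(e^{-X/2},\beta(0))$ and $d(e^{X/2},\beta(1))$, which is the desired inequality.

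\textbf{Main obstacle.} The central technical difficulty is that EMI is one-sided: it furnishes only a \emph{lower} bound for distances in $P$ in terms of tangent-space norms, whereas the midpoint inequality demands an \emph{upper} bound on $d(\id,m)$. Bridging this gap requires Segal's inequality $\|e^{X+Y}\|\leq\|e^{X/2}e^Ye^{X/2}\|$, which by \cite{cpr}*{Theorem~1} is equivalent to EMI; a careful account of how the transport of tangent vectors between different base points of $P$ interacts with the operator norm on $A_s$ is what ultimately produces the missing upper bound and closes the argument.
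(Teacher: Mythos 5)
Your first step (reducing convexity to the two--geodesic midpoint inequality by dyadic subdivision, using that restrictions of the geodesics \eqref{geodesic} are again geodesics up to affine reparametrization, and continuity of $t\mapsto d(\alpha(t),\beta(t))$) is correct and standard. The problem is that your second step is not an argument but a placeholder: after normalizing $\alpha(1/2)=\id$ and recording that $\beta(1)=\sigma_{m}(\beta(0))$ for $m=\beta(1/2)$, you say that ``applying EMI to the appropriate pairs of tangent vectors \dots together with an averaging step'' yields the bound $d(\id,m)\leq\frac12\bigl(d(e^{-X/2},\beta(0))+d(e^{X/2},\beta(1))\bigr)$, and you then concede yourself, in the ``main obstacle'' paragraph, that the exponential metric increasing property only produces \emph{lower} bounds on distances in $P$ and that some unspecified ``careful account'' of transporting tangent vectors would have to supply the missing upper bound. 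That unspecified account is precisely the content of the proposition (it is the statement that $(P,d)$ has seminegative curvature), so as written the proof has a genuine gap exactly at its crucial point. For comparison, the paper does not prove the proposition either: it cites \cite{correcht}, where the derivation from the EMI is carried out by a genuinely variational argument (estimating the length of the geodesics joining $\alpha(t)$ to $\beta(t)$ by integrating the expansivity of the differential of $\exp$, which is where Segal's inequality from \cite{cpr} actually enters); nothing of that mechanism appears in your sketch.

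If you want a self-contained way to close the gap that fits your midpoint reduction, avoid EMI altogether: using the isometric action and the auxiliary geodesic from $\alpha(0)$ to $\beta(1)$ with midpoint $m'$, the triangle inequality reduces the two--geodesic midpoint inequality to the common--endpoint case, namely $d(y^{1/2},z^{1/2})\leq\frac12 d(y,z)$ (apply it once with common point $\alpha(0)$ to bound $d(\alpha(1/2),m')$ and once with common point $\beta(1)$ to bound $d(m',\beta(1/2))$). After translating by $y^{-1/2}$, the common--endpoint inequality is exactly the L\"owner--Heinz theorem: if $r=d(y,z)$ then $e^{-r}y\leq z\leq e^{r}y$, and operator monotonicity of the square root gives $e^{-r/2}y^{1/2}\leq z^{1/2}\leq e^{r/2}y^{1/2}$, i.e.\ $d(y^{1/2},z^{1/2})\leq r/2$. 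Either supply this (or an equivalent) argument, or reproduce the estimate of \cite{correcht}; as it stands, the EMI/Segal step you invoke does not by itself yield the required upper bound.
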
 
In \cite{cpr3}*{Proposition 1} or \cite{schlicht}*{Chapter 4}, the following was proved:
\begin{prop}\label{invmetricap}
The action $I$ of $G$ on $(P,d)$ given by $g\cdot a=gag^*$ is isometric. 
\end{prop}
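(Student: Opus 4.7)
The plan is to verify $d(gag^*,gbg^*)=d(a,b)$ directly from the definition $d(x,y)=\|\log(x^{-\frac12}yx^{-\frac12})\|$. The key observation is that $(gag^*)^{-\frac12}\,gbg^*\,(gag^*)^{-\frac12}$ differs from $a^{-\frac12}ba^{-\frac12}$ only by conjugation with a suitable unitary element of $A$, after which the invariance of both the continuous functional calculus and the operator norm under unitary conjugation will finish the job.

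To carry this out, I would introduce the auxiliary element $u:=(gag^*)^{-\frac12}\,g\,a^{\frac12}$. A direct computation yields
\[
uu^* \;=\; (gag^*)^{-\frac12}\,gag^*\,(gag^*)^{-\frac12}\;=\;\id,
\]
and since $u$ is a product of invertible factors it admits a two-sided inverse, which must coincide with $u^*$. Hence $u$ is unitary. (Equivalently, $u$ is the unitary part in the left polar decomposition $g\,a^{\frac12}=(gag^*)^{\frac12}\,u$, reflecting the isomorphism $P\simeq G/U$ mentioned above.) Inserting $a^{\frac12}a^{-\frac12}=\id$ on either side of $b$, one rewrites
\[
(gag^*)^{-\frac12}\,gbg^*\,(gag^*)^{-\frac12}\;=\;u\bigl(a^{-\frac12}ba^{-\frac12}\bigr)u^*.
\]

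Setting $c:=a^{-\frac12}ba^{-\frac12}\in P$, continuous functional calculus gives $\log(ucu^*)=u(\log c)u^*$, and since conjugation by a unitary is an isometric $*$-automorphism of $A$, $\|u(\log c)u^*\|=\|\log c\|$. Combining these, $d(gag^*,gbg^*)=\|\log c\|=d(a,b)$, as desired. I do not foresee any real obstacle: once the candidate unitary $u$ is written down, everything follows from routine algebraic manipulation together with the standard equivariance of the functional calculus under unitary conjugation.
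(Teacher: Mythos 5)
Your argument is correct and complete: the element $u=(gag^*)^{-\frac12}ga^{\frac12}$ is indeed unitary (the identity $uu^*=\id$ together with invertibility of $u$ forces $u^{-1}=u^*$), the insertion of $a^{\frac12}a^{-\frac12}$ gives $(gag^*)^{-\frac12}gbg^*(gag^*)^{-\frac12}=u\bigl(a^{-\frac12}ba^{-\frac12}\bigr)u^*$, and unitary conjugation commutes with the functional-calculus logarithm and preserves the C$^*$-norm, yielding $d(gag^*,gbg^*)=d(a,b)$. Note that the paper itself does not prove this proposition but cites Corach--Porta--Recht and Schlicht; your proof is essentially the standard argument given there (the unitary part of the polar decomposition $ga^{\frac12}=(gag^*)^{\frac12}u$), so there is nothing to add or repair.
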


\subsection{Basic properties of the restricted canonical action}\label{basicprop}

In this subsection, we prove the basic non-metric properties of the canonical action restricted to subgroups $H<G$.

\begin{defn}
Let $A$ be a $C^*$-algebra, $G\subseteq A$ the group of invertible elements and $P\subseteq G$ the set of positive invertibles, then for a subgroup $H< G$ we define the action $I$ of $H$ on $P$ as $h\cdot a=I_h(a)=hah^*$. If clarification is needed, we write $I_H$ to express, which group is acting.\par 
The \textit{\textbf{fixed point set}} for this action is denoted by $P^H$. The \textit{\textbf{orbit}} of $a\in P$ shall be denoted by $\oo_H(a)$. A group $H$ is said to be \textit{\textbf{unitarizable}}, if there is an invertible operator $s$ such that $s^{-1}Hs$ is a group of unitaries.  
\end{defn}

\begin{rem}\label{posunit}
Note that if $s$ is a unitarizer of $H$ and $s=bu$ its polar decomposition, then $b$ is a positive unitarizer of $H$ as $u^{-1}b^{-1}Hbu$ is a group of unitaries. In this case $\|s\|=\|b\|$.
\end{rem}

The next proposition relates positive unitarizers to fixed points of $I_H$.

\begin{prop}\label{fixed}
A positive invertible operator $s$ unitarizes $H$ if and only if $s^2$ is a fixed point of the action $I_H$.
\end{prop}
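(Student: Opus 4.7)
The plan is to verify the equivalence by direct computation, exploiting that since $s$ is positive we have $s^*=s$ and hence $(s^{-1})^*=s^{-1}$. The heart of the argument is the identity
\begin{align*}
(s^{-1}hs)(s^{-1}hs)^*=s^{-1}h s s^* h^* s^{-1}=s^{-1}\bigl(h s^{2} h^{*}\bigr)s^{-1}=s^{-1}\,I_h(s^2)\,s^{-1},
\end{align*}
which I would compute first. Setting this equal to $I$ and multiplying by $s$ on both sides gives $I_h(s^2)=s^2$, so $s^{-1}hs$ is unitary if and only if $h$ fixes $s^2$ under the action $I_H$.

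Next I would package this into the two directions. For the forward direction, assume $s^{-1}Hs$ consists of unitaries; then for each $h\in H$ the element $s^{-1}hs$ is unitary, so by the identity above $h\cdot s^2=s^2$, hence $s^2\in P^H$. For the converse, assume $I_h(s^2)=s^2$ for every $h\in H$; the identity shows $(s^{-1}hs)(s^{-1}hs)^*=I$, and since $s^{-1}hs$ is invertible, this forces $(s^{-1}hs)^*=(s^{-1}hs)^{-1}$, i.e.\ $s^{-1}hs$ is unitary. Thus $s^{-1}Hs$ is a group of unitaries and $s$ unitarizes $H$.

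There is essentially no obstacle here; the whole content is the self-adjointness $(s^{-1})^*=s^{-1}$ coming from $s\in P$, which is exactly what turns the conjugation $h\mapsto s^{-1}hs$ into a statement about the action $g\cdot a=gag^*$ applied to $s^2$. The only point worth stating explicitly is the standard fact that an invertible bounded operator $T$ with $TT^*=I$ automatically satisfies $T^*T=I$ and is therefore unitary, which justifies the converse direction.
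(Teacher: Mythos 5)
Your proof is correct and follows essentially the same route as the paper: the identity $(s^{-1}hs)(s^{-1}hs)^*=s^{-1}\,I_h(s^2)\,s^{-1}$, using $s^*=s$, is exactly the chain of equivalences in the paper's proof. Your explicit remark that an \emph{invertible} operator $T$ with $TT^*=\id$ is unitary is a point the paper leaves implicit, but it is the same argument.
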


\begin{proof}
Observe that
\begin{align*}
s^{-1}Hs\subseteq U &\Leftrightarrow s^{-1}hs(s^{-1}hs)^*=\id\quad\forall h\in H\\
&\Leftrightarrow s^{-1}hs^2h^*s^{-1}=\id\quad\forall h\in H \\
&\Leftrightarrow I_h(s^2)=hs^2h^*=s^2\quad\forall h\in H. 
\end{align*}
\end{proof}

We next show how orbits and fixed point sets behave under translations. This is well-known but we include a proof for the sake of completeness.

\begin{prop}\label{tranorb}
Let a group $G$ act on a set $X$. If $H$ is a subgroup of $G$, then for $f\in G$ and $x\in X$ we have $f^{-1}\cdot\oo_H(x)=\oo_{f^{-1}Hf}(f^{-1}\cdot x)$ and $f^{-1}\cdot X^H=X^{f^{-1}Hf}$.
\end{prop}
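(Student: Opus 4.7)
The plan is to verify both equalities by direct computation using only the axioms of a group action, since the proposition is purely algebraic and makes no use of the metric or positivity structure. The key identity to exploit throughout is the associativity of the action, namely $(gh)\cdot y=g\cdot(h\cdot y)$, which lets us insert a factor $ff^{-1}=e$ at will.

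For the orbit statement, I would unwind the left-hand side as
\begin{align*}
f^{-1}\cdot\oo_H(x)=\{f^{-1}\cdot(h\cdot x):h\in H\},
\end{align*}
rewrite each element as $f^{-1}\cdot(h\cdot x)=(f^{-1}hf)\cdot(f^{-1}\cdot x)$ by associativity, and observe that as $h$ ranges over $H$, the conjugate $f^{-1}hf$ ranges over $f^{-1}Hf$. This gives the containment $\subseteq$, and the same computation read in reverse (or the analogous computation applied to $f^{-1}Hf$ and $f^{-1}\cdot x$ translated back by $f$) gives $\supseteq$.

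For the fixed point statement, I would argue by equivalence: $y\in f^{-1}\cdot X^H$ if and only if $f\cdot y\in X^H$, if and only if $h\cdot(f\cdot y)=f\cdot y$ for every $h\in H$. Applying $f^{-1}$ to both sides and using associativity, this is in turn equivalent to $(f^{-1}hf)\cdot y=y$ for every $h\in H$, i.e., $y\in X^{f^{-1}Hf}$.

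There is no real obstacle here; the only thing to be careful about is keeping the direction of conjugation consistent ($f^{-1}Hf$ versus $fHf^{-1}$) when inserting $ff^{-1}$, so that the two displayed equalities match the statement of the proposition exactly. Since the argument is formal, no appeal to Propositions~\ref{geoconv} or~\ref{invmetricap} is needed.
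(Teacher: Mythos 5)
Your proof is correct and is essentially the same as the paper's: both arguments are purely formal verifications using associativity of the action, with the orbit identity obtained by rewriting $f^{-1}\cdot(h\cdot x)$ as $(f^{-1}hf)\cdot(f^{-1}\cdot x)$ and the fixed-point identity obtained by the same chain of equivalences (the paper merely reads it in the opposite direction, starting from $x\in X^{f^{-1}Hf}$). The only cosmetic difference is that the paper first proves $\oo_{f^{-1}Hf}(x)=f^{-1}\cdot\oo_H(f\cdot x)$ and then substitutes $f^{-1}\cdot x$ for $x$, which is equivalent to your direct computation.
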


\begin{proof}
The first identity follows from
\begin{align*}
\oo_{f^{-1}Hf}(x)&=\{(f^{-1}hf)\cdot x:h\in H\}=\{f^{-1}\cdot (h\cdot (f\cdot x)):h\in H\}  \\
&=f^{-1}\cdot \{h\cdot (f\cdot x):h\in H\} =f^{-1}\cdot\oo_H(f\cdot x). 
\end{align*}
by substitution of $f^{-1}\cdot x$ for $x$. For the second identity, we see
\begin{align*}
x\in X^{f^{-1}Hf}&\Leftrightarrow f^{-1}hf\cdot x=x\; \forall h\in H \Leftrightarrow f^{-1}\cdot (h\cdot (f\cdot x))=x\; \forall h\in H \\
&\Leftrightarrow h\cdot f\cdot x=f\cdot x\; \forall h\in H  \Leftrightarrow f\cdot x\in X^H \Leftrightarrow x\in f^{-1}\cdot X^H. 
\end{align*}
\end{proof}

\begin{rem}\label{tranorb2}
If $A$ is a $C^*$-algebra, $P$ is the set of positive invertible elements and $G$ the group of invertible elements of $A$, then Proposition \ref{tranorb} says that for a subgroup $H$ of $G$ and for $f\in G$ and $a\in P$
\begin{align*}I_{f^{-1}}(\oo_H(a))=f^{-1}\oo_H(a)f^{-1*}=\oo_{f^{-1}Hf}(f^{-1}af^{-1*})\end{align*}
and
\begin{align*}I_{f^{-1}}(p^H)=f^{-1}p^Hf^{-1*}=P^{f^{-1}Hf}.\end{align*}
\end{rem}

\begin{rem}\label{fixunit}
If $H$ is a group of unitaries in a $C^*$-algebra $A\subseteq\B(\h)$, then the commutant $H'\cap A$ of $H$ in $A$ is a  $C^*$-algebra, hence
\begin{align*}
P^H&=\{a\in P: I_h(a)=hah^{-1}=a\; \forall h\in H\} \\
&= \{a\in P: ha=ah\; \forall h\in H\} \\
&=P\cap H'=\exp(H'\cap A_s). 
\end{align*}
 
\end{rem}

\begin{defn}\label{lietrip}
A closed real subspace $S\subseteq A_s\simeq T_{\id} P$ is called a \textit{\textbf{Lie triple system}} if $[[X,Y],Z]\in S$ for every $X,Y,Z\in S$. A closed submanifold $C\subseteq P$ is \textit{\textbf{totally geodesic}} if $\exp_a(T_aC)=C$ for all $a\in C$.\par 
Here, the bracket is given by the commutator $[X,Y]=XY-YX$.
\end{defn}

\begin{prop}\label{totalgeod}
Let $H$ be a group of invertible elements, then the fixed point set $P^H$ of the action $I$ is a totally geodesic submanifold of $P$.
\end{prop}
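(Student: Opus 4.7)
The plan is to reduce the assertion to the case where the base point is $\id$ and the acting group consists of unitaries, via the translations in Remark~\ref{tranorb2}, and then to read off the totally geodesic property directly from Remark~\ref{fixunit}.

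I would fix $a \in P^H$ and introduce the conjugate group $\tilde H := a^{-1/2} H a^{1/2}$. For any $h \in H$, the fixed-point identity $hah^* = a$ rearranges to $(a^{-1/2} h a^{1/2})(a^{-1/2} h a^{1/2})^* = \id$, so $\tilde H$ consists of unitaries. Remark~\ref{tranorb2} gives $I_{a^{-1/2}}(P^H) = P^{\tilde H}$ and sends $a$ to $\id$, so Remark~\ref{fixunit} applies and yields $P^{\tilde H} = \exp(S)$ with $S := \tilde H' \cap A_s$ a closed real subspace of $A_s$. Since $\exp \colon A_s \to P$ is a global diffeomorphism, $P^{\tilde H}$ is a closed Banach submanifold with $T_{\id} P^{\tilde H} = S$, and tautologically $\exp_{\id}(T_{\id} P^{\tilde H}) = P^{\tilde H}$, i.e., $P^{\tilde H}$ is totally geodesic at $\id$.

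To transfer this back to the point $a$, I would use that each $I_g$ is an automorphism of the symmetric space structure on $P$ (a direct check shows $I_g \circ \sigma_x = \sigma_{I_g x} \circ I_g$), hence maps geodesics to geodesics; in particular, formula~(\ref{geodesic}) gives the identity $\exp_a(a^{1/2} Y a^{1/2}) = a^{1/2} \exp(Y) a^{1/2}$ for all $Y \in A_s$. Differentiating the equality $P^H = a^{1/2} P^{\tilde H} a^{1/2}$ at $a$ then yields $T_a P^H = a^{1/2} S a^{1/2}$, and therefore
\begin{align*}
\exp_a(T_a P^H) = a^{1/2} \exp(S) a^{1/2} = I_{a^{1/2}}(P^{\tilde H}) = P^H,
\end{align*}
where the last step uses Remark~\ref{tranorb2} once more.

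The main technical point is the book-keeping behind the compatibility $\exp_a \circ (I_{a^{1/2}})_{*,\id} = I_{a^{1/2}} \circ \exp_{\id}$ of the translation $I_{a^{1/2}}$ with tangent spaces and exponential maps. This is what allows upgrading the easy case (unitary groups fixing $\id$, handled by the commutant description of the fixed point set) to the general statement, and it follows cleanly from the explicit geodesic formula~(\ref{geodesic}).
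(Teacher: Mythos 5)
Your argument is correct, but it takes a genuinely different route from the paper. The paper fixes a single positive unitarizer $f$ (treating the case $P^H=\emptyset$ separately), writes $P^H=fP^{f^{-1}Hf}f$, identifies $P^{f^{-1}Hf}=\exp\bigl((f^{-1}Hf)'\cap A_s\bigr)$ via Remark~\ref{fixunit}, observes that $(f^{-1}Hf)'\cap A_s$ is a Lie triple system, and then invokes Corollary 4.17 of \cite{condelarotonda2} to conclude that the exponential of a Lie triple system is totally geodesic, finally translating back by $f$. You instead verify the defining condition $\exp_a(T_aP^H)=P^H$ pointwise: for each fixed point $a$ you conjugate by $a^{-1/2}$, exploit that $a^{-1/2}Ha^{1/2}$ is then a group of \emph{unitaries} so that its fixed point set is $\exp(S)$ with $S=\tilde H'\cap A_s$, note that the condition at $\id$ is immediate since $\exp_{\id}$ is the ordinary exponential, and transport back using the intertwining $\exp_a\circ (I_{a^{1/2}})_{*,\id}=I_{a^{1/2}}\circ\exp_{\id}$, which indeed follows from formula~(\ref{geodesic}). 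The base-point-dependent conjugation is the key difference: it lets you bypass the Lie triple system machinery and the external citation entirely, yielding a more self-contained and elementary proof (the paper's route, in exchange, ties the statement to the general structure theory of Lie triple systems and lets the cited corollary carry the submanifold structure). One small caveat: your claim that $P^{\tilde H}=\exp(S)$ is a closed Banach submanifold with $T_{\id}P^{\tilde H}=S$ reduces, via the global diffeomorphism $\exp$, to $S$ being a submanifold of $A_s$, which in the Banach setting involves the question of whether the closed subspace $S$ is complemented; the paper delegates exactly this point to Corollary 4.17 of \cite{condelarotonda2}, so you should either cite the same result for the submanifold statement or flag the splitting assumption explicitly. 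Also state the trivial case $P^H=\emptyset$ for completeness, as the paper does.
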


\begin{proof}
If $H$ is not unitarizable then $P^H$ is empty. If $H$ is unitarizable and $f$ is a positive unitarizer, then by Proposition \ref{tranorb2} $P^H=fP^{f^{-1}Hf}f$, so that the fixed point set is a translation of the fixed point set of the unitary group $f^{-1}Hf$.\par 
By Remark \ref{fixunit} $P^{f^{-1}Hf}=P\cap (f^{-1}Hf)'=\exp((f^{-1}Hf)'\cap A_s)$.  Since $(f^{-1}Hf)'$ is a $*$-subalgebra of $A$, it is a Lie triple system. From the identity $[X,Y]^*=-[X^*,Y^*]$ one verifies easily that $A_s$ is in fact a Lie triple system.\par
Therefore, the intersection $(f^{-1}Hf)'\cap A_s$ is a Lie triple system and by Corollary 4.17 in \cite{condelarotonda2} $P^{f^{-1}Hf}=P\cap (f^{-1}Hf)'=\exp((f^{-1}Hf)'\cap A_s)$, being the exponential of a Lie triple system, is a totally geodesic submanifold. Since $P^H$ is a translation of the totally geodesic manifold $P^{f^{-1}Hf}$ it is also totally geodesic.
\end{proof}

\section{Metric characterization of the similarity number and size of a group}\label{simnum}

In this chapter, we define the size and similarity number of a group of invertible elements in a $C^*$-algebra and show how these quantities are related to the norm and completely bounded norm of unital homomorphism. We then show how the similarity number and size of a group depend on the diameter of orbits and the distance of orbits to fixed point sets of the canonical associated action on positive invertible elements.

\begin{defn}The \textbf{diameter} of $D\subseteq P$ is given by $\diam(D)=\sup_{x,y\in D}d(x,y)$. The \textbf{distance between two subsets} $C$ and $D\subseteq P$ is defined as $\dist(C,D)=\inf_{x\in C,y\in D}d(x,y)$. 
\end{defn}

\begin{defn}\label{sizesim}
We define the \textit{\textbf{size}} $\vert H\vert$ of a subgroup $H< G$ of $G$ by $|H|=\sup_{h\in H}\|h\|$. \par The \textit{\textbf{similarity number}} of $H$ is $\Sim(H)=\inf\{\|s\|\|s^{-1}\|:s \mbox{ is a unitarizer of }H\}$. From Remark \ref{posunit} one easily gets $\Sim(H)=\inf\{\|s\|\|s^{-1}\|:s \mbox{ is a positive unitarizer of }H\}$.
\end{defn}

In Pisier's approach to similarity problems (see \cites{pisier2,pisier3}) the similarity number and size of a group is used. Note that the similarity number defined above is not the same as the similarity degree defined by Pisier. The norm and completely bounded norm of a unital homorphism $\pi:A\to \B(\h)$ have known interpretations in terms of the size and similarity number of the group of invertibles $\pi(U_A)\subseteq G_{\B(\h)}$ which we recall.\par 
The following result is due to Haagerup, see Theorem 1.10  \cite{haag} or Theorem 9.1 and Corollary 9.2 in \cite{paulsen}. Here we use the notation $\Ad_s(a)=sas^{-1}$ and note that a bounded unital homorphism is not necessarily $*$-preserving.

\begin{thm}\label{haag}
Let $A$ be a $C^*$-algebra with unit and let $\pi:A\to \B(\h)$ be a bounded unital homomorphism. Then $\pi$ is similar to a $*$-homomorphism (i.e. there is an invertible $s\in \B(\h)$ such that $Ad_s\circ \pi$ is a $*$-homomorphism) if and only if $\pi$ is completely bounded. If $\pi$ is completely bounded then
\begin{align*}\|\pi\|_{c.b.}=\inf\{\|s^{-1}\|\|s\|:\Ad_s\circ \pi \mbox{  is a $*$-homomorphism  }\}.\end{align*}
\end{thm}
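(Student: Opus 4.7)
The plan follows the biconditional structure of the statement. The easy implication, that similarity to a $*$-homomorphism forces complete boundedness, is immediate: if $\Ad_s\circ\pi=\rho$ is a $*$-homomorphism, then $\pi=\Ad_{s^{-1}}\circ\rho$, and since $*$-homomorphisms are completely contractive the matrix amplifications satisfy $\|\pi_n\|\le\|s\|\|s^{-1}\|$ uniformly in $n$, whence $\|\pi\|_{c.b.}\le\|s\|\|s^{-1}\|$ for every unitarising $s$. This yields the inequality $\|\pi\|_{c.b.}\le\inf\{\|s^{-1}\|\|s\|\}$ in the displayed formula.

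For the substantial converse my plan is to invoke the Wittstock--Paulsen--Haagerup representation theorem: any completely bounded map $\pi:A\to\B(\h)$ admits a Stinespring-type factorisation $\pi(a)=V^*\rho(a)W$ through a $*$-representation $\rho:A\to\B(K)$ on some auxiliary Hilbert space $K$, with operators $V,W\in\B(\h,K)$ satisfying the norm equality $\|V\|\|W\|=\|\pi\|_{c.b.}$. I would derive this either via Paulsen's $2\times 2$ off-diagonal trick on $M_2(A)$, which reduces the matter to the completely positive case where Stinespring's theorem applies directly, or through Wittstock's extension theorem on operator systems followed by an analogous dilation.

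It remains to convert this factorisation into a genuine similarity, which is where the algebraic information that $\pi$ is a unital homomorphism enters. Unitality yields $V^*W=\id$, while multiplicativity translates into the relation $V^*\rho(a)(WV^*-\id_K)\rho(b)W=0$ for all $a,b\in A$. Passing to the $\rho(A)$-cyclic subspace generated by $W\h$, one can arrange that $WV^*$ is the orthogonal projection onto $W\h$ and commutes with $\rho(A)$, so that $\rho$ restricts to a $*$-representation $\rho_0$ on $W\h$. Identifying $\h$ with $W\h$ through $W$ and absorbing the isometric part of its polar decomposition into a unitary intertwiner produces a positive invertible $s$, essentially the modulus $|W|$, for which $\Ad_{s^{-1}}\circ\pi=\rho_0$ is a $*$-homomorphism with $\|s\|\|s^{-1}\|\le\|W\|\|V\|=\|\pi\|_{c.b.}$, supplying the matching lower bound.

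The principal obstacle is the decomposition theorem itself; achieving the sharp equality $\|V\|\|W\|=\|\pi\|_{c.b.}$ rather than a mere inequality requires the full strength of the completely bounded matrix norms together with a Hahn--Banach argument in the operator space category. By contrast, once the factorisation is in hand, the conversion from the factorisation $\pi=V^*\rho(\cdot)W$ to an actual similarity $\Ad_{s^{-1}}\circ\pi=\rho_0$ is essentially algebraic, driven entirely by unitality and multiplicativity.
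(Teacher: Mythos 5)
You should first note that the paper does not prove Theorem \ref{haag} at all: it is quoted as a known result, with references to Haagerup and to Paulsen (Theorem 9.1 and Corollary 9.2 there), so your proposal can only be compared with those sources. Your overall plan is exactly the proof given there: the easy direction and the bound $\|\pi\|_{c.b.}\le\|s\|\,\|s^{-1}\|$ for any similarity $s$ are as you say, and the substantial direction goes through the Wittstock--Paulsen--Haagerup factorization $\pi(a)=V^*\rho(a)W$ with $\rho$ a $*$-representation on $K$ and $\|V\|\,\|W\|=\|\pi\|_{c.b.}$, followed by an algebraic conversion into a similarity using $V^*W=\id$ and multiplicativity.

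However, the conversion step as you wrote it would fail. Passing to the cyclic subspace $E=\overline{\rho(A)W\h}$ does not let you ``arrange that $WV^*$ is the orthogonal projection onto $W\h$ and commutes with $\rho(A)$'': $WV^*$ is only an idempotent onto $\operatorname{ran}W$ along $\Ker V^*$, it is not self-adjoint in general, and $W\h$ is typically not $\rho(A)$-invariant, so $\rho$ does not restrict to a $*$-representation on $W\h$; accordingly $s$ is not ``essentially $|W|$''. The standard repair (this is what the cited proof does) keeps your ingredients but works on the other side: set $R=V^*|_E$, so $R(\rho(a)Wh)=\pi(a)h$, $\|R\|\le\|V\|$, $R$ is surjective since $R(Wh)=V^*Wh=h$, and $R\rho(b)|_E=\pi(b)R$ by multiplicativity; since $E$ reduces $\rho$, the invariant subspace $\Ker R$ has an invariant orthocomplement $F=E\ominus\Ker R$, and $R|_F:F\to\h$ is a bounded bijection with $(R|_F)^{-1}h=P_FWh$, hence $\|(R|_F)^{-1}\|\le\|W\|$. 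Identifying $F$ with $\h$ unitarily and taking the positive factor of the polar decomposition yields a positive invertible $s$ with $\Ad_{s^{-1}}\circ\pi$ a $*$-homomorphism and $\|s\|\,\|s^{-1}\|\le\|V\|\,\|W\|=\|\pi\|_{c.b.}$, which gives the matching inequality and shows the infimum is attained. With that one step corrected, your outline is precisely the argument of the sources the paper cites.
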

The following is Lemma 9.6 in \cite{paulsen} and follows from the fact that a $C^*$-algebra is the linear span of its unitaries.
\begin{prop}\label{orthog}
If $A$ and $B$ are unital $C^*$-algebras and $\pi:A\to B$ is a unital homomorphism, then $\pi$ is a $*$-homomorphism if and only if $\pi$ sends unitaries to unitaries, i.e. $\pi(U_A)\subseteq U_B$, where $U_A$ and $U_B$ are the group of unitaries of $A$ and $B$ respectively.\par 
Therefore, $\Ad_s\circ \pi$ is a $*$-homomorphism for an invertible $s\in B$ if and only if \newline$\Ad_{s^{-1}}(\pi(U_A))=s^{-1}\pi(U_A)s$ is a group of unitaries.
\end{prop}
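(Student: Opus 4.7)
The plan is a standard "unitary linear span" argument with two directions plus a direct consequence for the similarity-conjugated map.

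For the forward direction, assume $\pi$ is a $*$-homomorphism and pick $u\in U_A$, i.e. $u^*u=uu^*=1_A$. Then
\begin{align*}
\pi(u)^*\pi(u)=\pi(u^*)\pi(u)=\pi(u^*u)=\pi(1_A)=1_B,
\end{align*}
and symmetrically $\pi(u)\pi(u)^*=1_B$, so $\pi(u)\in U_B$. This requires no cleverness beyond multiplicativity, unitality, and compatibility with the involution.

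The converse is where the actual content lies. Assume $\pi(U_A)\subseteq U_B$ and take an arbitrary $a\in A$. The key ingredient I would invoke is the classical fact that in any unital $C^*$-algebra every self-adjoint element of norm $\leq 1$ is the sum of two unitaries (via $u_\pm = h \pm i\sqrt{1-h^2}$), and hence by decomposing $a=\tfrac12(a+a^*)+\tfrac{i}{2}\cdot\tfrac{1}{i}(a-a^*)$ one can write $a=\sum_{i=1}^{n}\lambda_i u_i$ with $u_i\in U_A$ and $\lambda_i\in\C$. Then
\begin{align*}
\pi(a^*) = \sum_{i}\bar{\lambda}_i\,\pi(u_i^{-1}) = \sum_{i}\bar{\lambda}_i\,\pi(u_i)^{-1} = \sum_{i}\bar{\lambda}_i\,\pi(u_i)^* = \Bigl(\sum_{i}\lambda_i\,\pi(u_i)\Bigr)^{\!*} = \pi(a)^*,
\end{align*}
where the second equality uses that $\pi$ is a homomorphism (so it sends inverses to inverses) and the third uses the hypothesis $\pi(u_i)\in U_B$, for which inverse and adjoint coincide. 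Thus $\pi$ is a $*$-homomorphism.

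For the "Therefore" statement, I would simply apply the biconditional just proved to the unital homomorphism $\Ad_s\circ\pi: A\to B$ in place of $\pi$: this map is a $*$-homomorphism precisely when it carries $U_A$ into $U_B$, i.e.\ precisely when the set $s\,\pi(U_A)\,s^{-1}$ (equivalently, up to inversion inside the group, $s^{-1}\pi(U_A)s$) consists of unitaries, and this set is automatically a group because $\pi(U_A)$ is. The only real obstacle in the whole proof is having the linear-span-by-unitaries fact available; once invoked, everything is bookkeeping with the involution.
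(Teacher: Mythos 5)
Your proof of the main equivalence is correct and is essentially the route the paper intends: the paper gives no proof at all, only the citation of Lemma 9.6 in Paulsen together with the remark that the statement ``follows from the fact that a $C^*$-algebra is the linear span of its unitaries'', and that span argument is exactly what you carry out. (One cosmetic slip: a self-adjoint contraction $h$ is the \emph{average} $\tfrac12(u_++u_-)$ of the unitaries $u_\pm=h\pm i\sqrt{1-h^2}$, not their sum; the scalars in your combination absorb the factor, so nothing breaks.)

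The genuine defect is the parenthetical in your last step. With the paper's convention $\Ad_s(a)=sas^{-1}$, applying the first part to $\Ad_s\circ\pi$ gives: $\Ad_s\circ\pi$ is a $*$-homomorphism iff $s\,\pi(U_A)\,s^{-1}\subseteq U_B$. Your claim that this is ``equivalently, up to inversion inside the group'' the condition $s^{-1}\pi(U_A)s\subseteq U_B$ is false: inverting the elements of $s\,\pi(U_A)\,s^{-1}$ returns $s\,\pi(U_A)^{-1}s^{-1}=s\,\pi(U_A)\,s^{-1}$, still a conjugate by $s$, and conjugation by $s$ and by $s^{-1}$ are genuinely inequivalent conditions. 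Concretely, take a unitary $u$ and a positive invertible $s$ not commuting with $u$, and let $H$ be the group generated by $g=s^{-1}us$; then $sHs^{-1}=\langle u\rangle$ consists of unitaries, while $s^{-1}gs=s^{-2}us^{2}$ is not unitary (its unitarity would force $u$ to commute with $s^{4}$, hence with $s$). The blame lies partly with the statement itself, which swaps $s$ and $s^{-1}$ relative to the paper's definition of $\Ad$; the consistent reading --- and the one actually used in Proposition \ref{simcb} --- is that $\Ad_{s^{-1}}\circ\pi$ is a $*$-homomorphism iff $s^{-1}\pi(U_A)s$ is a group of unitaries, i.e.\ iff $s$ unitarizes $\pi(U_A)$. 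So either state the conclusion for $\Ad_s\circ\pi$ with the set $s\,\pi(U_A)\,s^{-1}$, or apply the first part to $\Ad_{s^{-1}}\circ\pi$; the ``inversion inside the group'' justification has to go.
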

Combining the previous two results we get:
\begin{prop}\label{simcb}
Let $A$ be a $C^*$-algebra with unit and let $\pi:A\to \B(\h)$ be a completely bounded unital homomorphism. Then
\begin{align*}\|\pi\|_{c.b.}=\Sim({\pi(U_A)})\end{align*}
\end{prop}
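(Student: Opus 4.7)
The plan is to deduce the identity by chaining the two cited results with the definition of $\Sim$. Specifically, I will argue that the set of invertibles $s\in\B(\h)$ for which $\Ad_s\circ\pi$ is a $*$-homomorphism coincides exactly with the set of unitarizers of the subgroup $\pi(U_A)\subseteq G_{\B(\h)}$, so that Haagerup's infimum from Theorem \ref{haag} is literally the same as the infimum defining $\Sim(\pi(U_A))$.

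First I would note that, since $\pi$ is a unital homomorphism, it carries invertibles to invertibles and in particular $\pi(U_A)$ is a subgroup of $G_{\B(\h)}$, so $\Sim(\pi(U_A))$ is well-defined. Invoking Theorem \ref{haag}, and using that $\pi$ is assumed completely bounded, yields
\begin{align*}
\|\pi\|_{c.b.}=\inf\{\|s\|\|s^{-1}\|:\Ad_s\circ\pi \mbox{ is a $*$-homomorphism}\}.
\end{align*}

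Next I would apply Proposition \ref{orthog}, which rephrases the clause ``$\Ad_s\circ\pi$ is a $*$-homomorphism'' as ``$s^{-1}\pi(U_A)s$ is a group of unitaries'', i.e.\ as ``$s$ is a unitarizer of $\pi(U_A)$''. Substituting this characterisation into the previous identity gives
\begin{align*}
\|\pi\|_{c.b.}=\inf\{\|s\|\|s^{-1}\|:s \mbox{ is a unitarizer of } \pi(U_A)\},
\end{align*}
and by Definition \ref{sizesim} the right-hand side is exactly $\Sim(\pi(U_A))$.

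All the genuine analytic content is already packaged in Haagerup's Theorem \ref{haag}, and the algebraic translation between conjugation by $s$ and unitarization of $\pi(U_A)$ is supplied by Proposition \ref{orthog}; consequently I do not anticipate any real obstacle in this proposition. Its role is purely to register the dictionary between Pisier's quantitative invariant $\Sim$ and the operator-algebraic norm $\|\cdot\|_{c.b.}$, which is the bridge that will let the geometric arguments of the later chapters speak about completely bounded homomorphisms.
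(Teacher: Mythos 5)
Your proposal is correct and follows essentially the same route as the paper: chain Haagerup's Theorem \ref{haag} with the reformulation in Proposition \ref{orthog} and then identify the resulting infimum with $\Sim(\pi(U_A))$ via Definition \ref{sizesim} (the symmetry $\|s\|\|s^{-1}\|=\|s^{-1}\|\|(s^{-1})^{-1}\|$ makes the choice of conjugating by $s$ or $s^{-1}$ immaterial). No gaps.
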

\begin{proof}
\begin{align*}
\|\pi\|_{c.b.}&=\inf\{\|s\|\|s^{-1}\|:\Ad_{s^{-1}} \circ \pi \mbox{ is a $*$-homomorphism } \}&\mbox{by Theorem \ref{haag}  } \\
&=\inf\{\|s\|\|s^{-1}\|:s \mbox{ is a unitarizer of } \pi(U_A) \}&\mbox{by Corollary \ref{orthog}} \\
&=\Sim({\pi(U_A)})&\mbox{by Definition \ref{sizesim} }
\end{align*}
\end{proof}

\begin{rem}
A corollary of the Russo-Dye theorem, which states that the closed unit ball in a unital $C^*$-algebra is the closed convex hull of unitaries (see \cite{davidson}*{Theorem I.8.4}), is the fact that if $A$ and $B$ are unital $C^*$-algebras and $\pi:A\to B$ is a unital homomorphism, then $\|\pi\|=|\pi(U_A)|$.
\end{rem}

The next theorem characterizes metrically the similarity number and size of a group $H$ of invertible operators in a $C^*$-algebra in terms of the canonical isometric action of $H$ on $P$.

\begin{thm}\label{distdiam}
For a group $H$ of invertible elements in a $C^*$-algebra the identities 
\begin{align*}\dist\left(\oo_H(\id),P^H\right)=\dist\left(\id,P^H\right)=\log(\Sim(H))\end{align*}
and
\begin{align*}\diam(\oo_H(\id))=2\log(|H|)\end{align*}
hold. 
\end{thm}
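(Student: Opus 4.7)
The plan is to handle the diameter identity and the two distance identities separately, in each case exploiting the isometric action of $H$ on $P$ from Proposition \ref{invmetricap} together with the characterization of $P^H$ in Proposition \ref{fixed}. A recurrent auxiliary fact I would use is that for any positive invertible $x\in P$ the spectral description of the norm gives $\|\log x\|=\max(\log\|x\|,\log\|x^{-1}\|)$.

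For the diameter, applying the isometry $I_{h_1^{-1}}$ reduces
\begin{align*}
d(h_1h_1^*,h_2h_2^*)=d(\id,(h_1^{-1}h_2)(h_1^{-1}h_2)^*),
\end{align*}
and as $(h_1,h_2)$ varies the element $h_1^{-1}h_2$ exhausts $H$, so $\diam(\oo_H(\id))=\sup_{h\in H}\|\log(hh^*)\|$. The $C^*$-identity yields $\|hh^*\|=\|h\|^2$ and $\|(hh^*)^{-1}\|=\|h^{-1}\|^2$, so $\|\log(hh^*)\|=2\max(\log\|h\|,\log\|h^{-1}\|)$. Taking the supremum over $h\in H$ and using $H=H^{-1}$ gives $2\log|H|$.

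For the first distance equality, if $h\in H$ and $p\in P^H$, then isometry together with $h\cdot p=p$ yields $d(h\cdot\id,p)=d(h\cdot\id,h\cdot p)=d(\id,p)$, so infima agree and $\dist(\oo_H(\id),P^H)=\dist(\id,P^H)$. For the second equality, Proposition \ref{fixed} identifies $P^H$ with $\{s^2:s\text{ positive unitarizer of }H\}$, whence
\begin{align*}
\dist(\id,P^H)=\inf_s d(\id,s^2)=2\inf_s\|\log s\|=2\inf_s\max(\log\|s\|,\log\|s^{-1}\|).
\end{align*}
The inequality $2\max(a,b)\geq a+b$ immediately yields $2\inf_s\max(\log\|s\|,\log\|s^{-1}\|)\geq\log\Sim(H)$. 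For the reverse direction I would use a scaling trick: if $s$ is a positive unitarizer, then so is $\lambda s$ for every $\lambda>0$, since $(\lambda s)^{-1}H(\lambda s)=s^{-1}Hs$; moreover $\|\lambda s\|\|(\lambda s)^{-1}\|=\|s\|\|s^{-1}\|$ is scaling invariant. Choosing $\lambda=\sqrt{\|s^{-1}\|/\|s\|}$ equalizes $\|\lambda s\|=\|(\lambda s)^{-1}\|=\sqrt{\|s\|\|s^{-1}\|}$, so $2\max(\log\|\lambda s\|,\log\|(\lambda s)^{-1}\|)=\log(\|s\|\|s^{-1}\|)$, and infimizing over $s$ supplies the remaining inequality.

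The only real subtlety is the scaling step in the second equality: one must observe that positive unitarizers are closed under positive scalar multiplication, that $\|s\|\|s^{-1}\|$ is a scaling invariant, and that one can always rebalance so the worse of $\|s\|$ and $\|s^{-1}\|$ contributes exactly the geometric mean. The rest is a direct combination of the isometry of $I_H$ with the spectral formula for $\|\log\cdot\|$ on positive operators.
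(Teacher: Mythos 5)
Your proposal is correct and follows essentially the same route as the paper: the isometry of the action reduces both the orbit diameter and the orbit-to-fixed-set distance to quantities at $\id$, and the identity $\dist(\id,P^H)=\log(\Sim(H))$ is obtained by the same spectral formula for $\|\log(\cdot)\|$ combined with a rescaling/rebalancing argument (the paper scales the fixed point $a\mapsto\alpha a$, you equivalently scale the unitarizer $s\mapsto\lambda s$ to symmetrize $\|s\|$ and $\|s^{-1}\|$). No gaps.
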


\begin{proof}

We denote by $\lambda_{\max}(a)$ and by $\lambda_{\min}(a)$ the maximum and the minimum of the spectrum of $a\in P$. Then, using the characterization of unitarizers
\begin{align}\label{ig1}
\Sim(H)&=\inf\{\|s\|\|s^{-1}\|:s \mbox{ is a positive unitarizer of }H\} \nonumber\\
&\stackrel{\textrm{Prop. \ref{fixed}}}=\inf\left\{\left\|a^{\frac12}\right\|\left\|a^{-\frac12}\right\|:a \in P^H\right\}=\inf_{a \in P^H}\left(\frac{\lambda_{\max}(a)}{\lambda_{\min}(a)}\right)^{\frac12}.
\end{align}
Also, using the fact that for $a\in P^H$ and $\alpha >0$ we have $\alpha a \in P^H$
\begin{align}\label{ig2}
\dist(\id,P^H)&=\inf_{a \in P^H}d(\id,a)=\inf_{a \in P^H}\|\log(a)\| \nonumber\\
&=\inf_{a \in P^H}\max\{\log(\lambda_{\max}(a)),-\log(\lambda_{\min}(a))\} \nonumber\\
&=\inf_{a \in P^H, \alpha > 0}\max\{\log(\lambda_{\max}(\alpha a)),-\log(\lambda_{\min}(\alpha a))\}  \\
&=\inf_{a \in P^H, c \in \mathbb{R}}\max\{\log(\lambda_{\max}(a)) + c,-\log(\lambda_{\min}(a)) -c\} \nonumber\\
&=\inf_{a \in P^H}\frac12(\log(\lambda_{\max}(a))-\log(\lambda_{\min}(a))) \nonumber\\
&=\log\left(\inf_{a \in P^H}\left(\frac{\lambda_{\max}(a)}{\lambda_{\min}(a)}\right)^{\frac12}\right). \nonumber
\end{align}
Combining (\ref{ig1}) and (\ref{ig2}) we get $\dist(\id,P^H)=\log(\Sim(H))$. Also
\begin{align*}
\dist(\oo_H(\id),P^H)&= \inf_{h\in H}\dist(I_h(\id),P^H) =\inf_{h\in H}\dist(\id,I_{h^{-1}}(P^H)) \\
&=\inf_{h\in H}\dist(\id,P^H) =\dist(\id,P^H),  
\end{align*}
where the second equality follows from the fact that $I$ is isometric, and the third equality follows from the fact that $P^H$ is $I_H$ invariant. Since 
\begin{align*}
d(\id,hh^*)&=\|\log(hh^*)\|=\max\{\log\|hh^*\|,\log\|(hh^*)^{-1}\|\}  \\
&=\max\{\log(\|h\|^2),\log(\|h^{-1}\|^2)\|\} 
\end{align*}
it follows that
\begin{align*}
\diam(\oo_H(\id))&= \sup_{h\in H} d(\id,hh^*)=\sup_{h\in H} \max\{\log(\|h\|^2),\log(\|h^{-1}\|^2)\|\} \\
&=\sup_{h\in H} \log(\|h\|^2)=\sup_{h\in H} 2\log(\|h\|) =2\log(|H|). 
\end{align*}
\end{proof}

\begin{rem}
For $H$ a unitarizable group, $h\in H$ and a positive unitarizer $s$ of $H$ one has $\|h\|=\|s(s^{-1}hs)s^{-1}\|\leq \|s\|\|s^{-1}hs\|\|s^{-1}\|= \|s\|\|s^{-1}\|$ as $s^{-1}hs$ is unitary. Taking the supremum over $h\in H$ and the infimum over positive unitarizers $s$ we obtain $|H|\leq \Sim(H).$\par 
Now, after taking logarithms this inequality turns into $D_H(\id)\leq 2\dist(\id,P^H),$ which holds for any isometric action on a metric space.\par 
Therefore, the fact that $|H|\leq \Sim(H)$ (and in particular the fact that $\|\pi\|\leq\|\pi\|_{c.b}$ for a unital homomorphism $\pi:A \to \B(\h)$) corresponds to the geometric fact that the diameter of the orbit of the identity element is less or equal than twice the distance between the identity element and the fixed point set of the action.
\end{rem}

\begin{defn}
A positive invertible $a\in P$ with spectrum $\sigma(a)$ is said to have \textit{\textbf{symmetric spectrum}}, if $\log(\max(\sigma(a)))=-\log(\min(\sigma(a)))$.\par 
Note that this is equivalent to $\|a\|=\|a^{-1}\|$.
\end{defn}

\begin{rem}\label{simdist}
Observe in the proof of $\dist(\id,P^H)=\log(\Sim(H))$ that an $a\in P^H$ minimizing the distance to $\id$ corresponds to a unitarizer $a^{\frac12}$, which minimizes the quantity $\|s\|\|s^{-1}\|$ among all unitarizers. Also, a unitarizer $s$ such that $\|s\|\|s^{-1}\|=\Sim(H)$ can be scaled to have symmetric spectrum and the resulting scaled fixed point $s^2$ minimizes the distance to $\id$. 
\end{rem}

The next lemma shows that closest points to a point $b\in P$ in $P^H$ and -by the previous remark- unitarizers realizing the similarity number exist.

\begin{lem}\label{minunit}
Let $A\subseteq \B(\h)$ be a von Neumann algebra acting on a separable Hilbert space $\h$ and let $H$ be a group of invertible operators in $A$, then for $b\in P$ there is an $a\in P^H$ such that $\dist(b,P^H)=d(b,a)$.
\end{lem}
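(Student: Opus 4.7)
The plan is to combine a minimizing-sequence argument with the weak operator topology (WOT) compactness that is available since $\h$ is separable and $A$ is a von Neumann algebra. Set $D=\dist(b,P^H)$ and pick a minimizing sequence $a_n\in P^H$ with $d(b,a_n)\to D$. Writing $c_n=b^{-\frac12}a_nb^{-\frac12}$, so that $d(b,a_n)=\|\log c_n\|$, one has for every $\epsilon>0$ and all $n$ large the operator inequality $e^{-(D+\epsilon)}\id\le c_n\le e^{D+\epsilon}\id$, and in particular $\{a_n\}$ is norm bounded.

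Next I extract a WOT-convergent subsequence: on a separable Hilbert space the WOT restricted to any norm-bounded subset of $\B(\h)$ is metrizable and compact, so some subsequence $a_{n_k}$ converges WOT to a limit $a\in\B(\h)$. Since $A$ is a von Neumann algebra it is WOT closed, hence $a\in A$. For each fixed $h\in H$ the conjugation $x\mapsto hxh^*$ is WOT-continuous (as a composition of a left and a right multiplication, both WOT continuous), so passing to the limit in $ha_nh^*=a_n$ yields $hah^*=a$. Thus $a$ is fixed by all of $H$.

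It remains to check that $a\in P$ and that $d(b,a)\le D$. Left and right multiplication by $b^{-\frac12}$ are WOT-continuous, so $b^{-\frac12}a_{n_k}b^{-\frac12}\to b^{-\frac12}ab^{-\frac12}$ in WOT. The order inequalities $e^{-(D+\epsilon)}\id\le c_{n_k}\le e^{D+\epsilon}\id$ (valid for $k$ large) are preserved under WOT limits of self-adjoint operators, since $\{x\in\B(\h):\langle xv,v\rangle\ge 0\}$ is WOT-closed for each $v\in\h$. Hence $e^{-(D+\epsilon)}\id\le b^{-\frac12}ab^{-\frac12}\le e^{D+\epsilon}\id$ for every $\epsilon>0$, which shows that $b^{-\frac12}ab^{-\frac12}$ is positive invertible (so $a\in P$) and that $\|\log(b^{-\frac12}ab^{-\frac12})\|\le D$, i.e.\ $d(b,a)\le D$. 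Combined with $a\in P^H$ this forces $d(b,a)=D$.

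The only subtle point I expect is the passage to the limit in the \emph{distance} itself: $\log$ is not WOT continuous, so one cannot simply extract $d(b,a)$ from $\lim d(b,a_{n_k})$. The trick above is that the uniform two-sided \emph{operator} bounds on $c_{n_k}$, rather than the norms of their logarithms, are what survives a WOT limit, and converting these bounds back into a norm estimate on $\log(b^{-\frac12}ab^{-\frac12})$ afterwards bypasses the continuity issue.
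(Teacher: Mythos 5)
Your proof is correct and follows essentially the same route as the paper: a minimizing sequence, WOT compactness and metrizability on norm-bounded sets of $\B(\h)$ for separable $\h$, WOT-closedness of the relations $hah^*=a$ and of operator intervals, and conversion of the surviving two-sided operator bounds back into the estimate $d(b,a)\le\dist(b,P^H)$. The only cosmetic difference is that the paper first reduces to $b=\id$ via the isometric translation $c\mapsto b^{-\frac12}cb^{-\frac12}$, whereas you carry $b$ along by conjugating inside the estimates.
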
 

\begin{proof}
Using the isometric translation $c\mapsto b^{-\frac12}cb^{-\frac12}$ we can assume by Proposition \ref{tranorb2} that $b=\id$. For $a\in P$
\begin{align*}d(\id,a)=\|\log(a)\|=\max\{\log(\lambda_{\max}(a)),-\log(\lambda_{\min}(a))\},\end{align*}
where $\lambda_{\max}(a)$ and $\lambda_{\min}(a)$ denote the maximum and minimum of the spectrum of \\$a\in P\subseteq A_s$.
Hence the closed metric balls around $\id$ are operator intervals, i.e.
\begin{align*}B[\id,r]=\{b\in P:d(\id,b)\leq r\}=[e^{-r}\id,e^{r}\id]\subseteq P.\end{align*} 
There is a sequence $(a_n)_n\subseteq P^H$ such that $d(\id,a_n)\to \dist(\id,P^H)=\inf_{b \in P^H}d(\id,b)$. Since the set
$$\{a\in A:hah^*=a\; \forall h\in H\}=\bigcap_{h\in H}\{a\in A:hah^*=a\}$$
is weak operator closed, and for every $r>0$ the set $[e^{-r}\id,e^{r}\id]$ is is also weak operator closed, we conclude that $P^H\cap [e^{-r}\id,e^{r}\id]$ is weak operator closed. Also, since the weak operator topology on closed balls is metrizable and compact, it follows that there is a subsequence of $(a_n)_n$ converging weakly to an $a\in P^H$.\par 
This subsequence, which we still denote by $(a_n)_n$, also satisfies $d(\id,a_n)\to \dist(\id,P^H)$. Now, for every $\epsilon >0$ there is an $n_{\epsilon}\in \N$ such that for $n\geq n_{\epsilon}$ 
\begin{align*}a_n\in B[\id,\dist(\id,P^H)+\epsilon]=[e^{-\dist(\id,P^H)-\epsilon}\id,e^{\dist(\id,P^H)+\epsilon}\id].\end{align*} 
Since operator intervals are weak operator closed, it follows that the weak operator limit $a$ of $(a_n)_n$ is in $[e^{-\dist(\id,P^H)-\epsilon}\id,e^{\dist(\id,P^H)+\epsilon}\id]$. Therefore, $d(\id,a)<\dist(\id,P^H)+\epsilon$ for every $\epsilon>0$ so that $d(\id,a)\leq \dist(\id,P^H)$.\par
Since $d(\id,a)\geq \dist(\id,P^H)=\inf_{b \in P^H}d(\id,b)$ the conclusion follows.
\end{proof}

\section{Geometric interpolation of the similarity number and size of a group}\label{interpolation}
This chapter begins with a geometric interpolation result relating the similarity number and size of one-parameter families of groups of invertible operators. Then a translation of some of Pisier's results about the similarity degree of groups and operator algebras into our geometric set up is given and we prove a geometric interpolation result about the similarity constants of certain group extensions. We end with a subsection studying the orbit structure of isometric actions on totally geodesic sub-manifolds of positive invertible operators with stronger convexity properties.

\begin{defn}
A \textit{\textbf{geodesically convex set}} is a subset $C\subseteq P$ such that $\gamma_{a,b}(t)\in C$ for all $a,b\in C$ and $t\in [0,1]$. A map $f:P\to \R$ is called a \textit{\textbf{geodesically convex function}}, if $f(\gamma_{a,b}(t))\leq (1-t)f(a)+tf(b)$ holds for all $a,b\in C$ and $t\in [0,1]$. 
\end{defn}

\begin{nota}
For a uniformly bounded group $H$ of invertible operators in a $C^*$-algebra we denote by $D_H$ the orbit diameter function $D_H: P \to \mathbb{R}^+,~D_H(a)=\diam(\oo_H(a))$.
\end{nota}

\begin{lem}\label{lemdh}
The map $D_H: P \to \mathbb{R}^+$ is invariant for the action of $I_H$, geodesically convex and $2$-Lipschitz. 
\end{lem}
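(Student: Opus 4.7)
The plan is to verify the three properties in sequence, leaning on the isometry of the action (Proposition \ref{invmetricap}), the convexity of the distance along geodesics (Proposition \ref{geoconv}), and the triangle inequality. For \textit{invariance}, I note that for any $h_0 \in H$ the map $h \mapsto h h_0$ is a bijection of $H$, hence $\oo_H(I_{h_0}(a)) = \{(hh_0)\cdot a\cdot (hh_0)^* : h \in H\} = \oo_H(a)$ as sets, and so $D_H(I_{h_0}(a)) = D_H(a)$.

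For \textit{geodesic convexity}, the key auxiliary fact is that the $G$-action $I$ preserves the canonical geodesics, i.e.\ $I_g(\gamma_{a,b}(t)) = \gamma_{I_g(a), I_g(b)}(t)$ for all $g \in G$ and $a,b \in P$. One can verify this from formula (\ref{geodesic}) via the polar decomposition $g a^{1/2} = (gag^*)^{1/2} u$ with $u$ unitary, using that unitary conjugation commutes with the $t$-th power in functional calculus; alternatively it follows from the intertwining of $I_g$ with the Cartan symmetries $\sigma_a$, which makes $I_g$ a morphism of the symmetric space structure. Given this, for $a,b \in P$ and $h_1, h_2 \in H$ the paths $t \mapsto I_{h_i}(\gamma_{a,b}(t))$ are themselves canonical geodesics (between $I_{h_i}(a)$ and $I_{h_i}(b)$), and Proposition \ref{geoconv} yields
\[
d(I_{h_1}(\gamma_{a,b}(t)), I_{h_2}(\gamma_{a,b}(t))) \leq (1-t)\, d(I_{h_1}(a), I_{h_2}(a)) + t\, d(I_{h_1}(b), I_{h_2}(b)),
\]
which is bounded above by $(1-t) D_H(a) + t\, D_H(b)$. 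Taking the supremum over $h_1, h_2 \in H$ on the left gives convexity of $D_H$ along $\gamma_{a,b}$.

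For the \textit{$2$-Lipschitz} estimate, for any $h_1, h_2 \in H$ the triangle inequality combined with the isometry of $I_H$ (Proposition \ref{invmetricap}) gives
\begin{align*}
d(I_{h_1}(a), I_{h_2}(a)) &\leq d(I_{h_1}(a), I_{h_1}(b)) + d(I_{h_1}(b), I_{h_2}(b)) + d(I_{h_2}(b), I_{h_2}(a)) \\
&= 2\, d(a,b) + d(I_{h_1}(b), I_{h_2}(b)).
\end{align*}
Bounding the last term by $D_H(b)$ and taking the supremum over $h_1, h_2$ yields $D_H(a) \leq 2\, d(a,b) + D_H(b)$; exchanging the roles of $a$ and $b$ completes the Lipschitz estimate. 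The only non-formal input is the preservation of the canonical geodesic structure by $I_g$; once that is in hand the three statements reduce to routine manipulations of the metric and the definition of diameter.
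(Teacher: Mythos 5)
Your proof is correct and follows essentially the same route as the paper: invariance from $\oo_H(h\cdot a)=\oo_H(a)$, convexity from Proposition \ref{geoconv} applied to the translated geodesics, and the Lipschitz bound from the triangle inequality plus the isometry of $I_H$ (Proposition \ref{invmetricap}). The only difference is that you explicitly verify that $I_g$ maps the canonical geodesics to canonical geodesics (via the polar decomposition $ga^{1/2}=(gag^*)^{1/2}u$), a fact the paper uses implicitly in the identity $h\cdot\gamma_{a,b}(t)=\gamma_{h\cdot a,h\cdot b}(t)$, so your write-up is if anything slightly more complete.
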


\begin{proof}
The invariance if $D_H$ follows from the fact that $\oo_H(h\cdot a)=\oo_H(a)$ for $a\in P$ and $h\in H$. To prove that $D_H$ is geodesically convex, we see that for a geodesic $\gamma_{a,b}:[0,1]\to P$ the following holds
\begin{align*}
D_H(\gamma_{a,b}(t))&=\sup_{h\in H}d(\gamma_{a,b}(t),h\cdot \gamma_{a,b}(t)) = \sup_{h\in H}d(\gamma_{a,b}(t), \gamma_{h\cdot a,h\cdot b}(t)) \\
&\stackrel{\textrm{Prop \ref{geoconv}}}\leq \sup_{h\in H}(td(a,h\cdot a) + (1-t)d(b,h\cdot b)) \\
&\leq t\sup_{h\in H}d(a,h\cdot a) + (1-t)\sup_{h\in H}d(b,h\cdot b) \\
&= tD_H(a) +(1-t)D_H(b). 
\end{align*}
the $2$-Lipschitz property for $D_H$, follows from
\begin{align*}
D_H(a)&=\sup_{h\in H}d(a,h\cdot a) \leq \sup_{h\in H}(d(a,b)+d(b,h\cdot b)+d(h\cdot b,h\cdot a)) \\
&=\sup_{h\in H}(2d(a,b)+ d(b,h\cdot b)) =2d(a,b)+\sup_{h\in H}d(b,h\cdot b) =2d(a,b)+D_H(b). 
\end{align*}
By symmetry, we get $D_H(b)-D_H(a)\leq 2d(b,a)$ so that $|D_H(a)-D_H(b)|\leq 2d(a,b)$.
\end{proof}

\begin{rem}
For a geodesic $\gamma$ in $P$, the quotient 
\begin{align*}f_{\gamma}(t)=\frac{D_H(\gamma(t))-D_H(\gamma(0))}{d(\gamma(t),\gamma(0))}\end{align*}
is a convex function of $t$ because $D_H$ is geodesically convex. It is bounded above by $2$ and bounded below by $-2$ because $D_H$ is $2$-Lipschitz.\par 
Therefore the limit of $\lim\limits_{t\to\infty}f_{\gamma}(t)$ exists and we can interpret this quantity as a slope of $D_H$ at infinity.
\end{rem}

\begin{lem}\label{lemdist} 
For a geodesically convex subset $C\subseteq P$, the map 
\begin{align*}P\to \R^+,\quad a\mapsto \dist(a,C)\end{align*}
is geodesically convex and 1-Lipschitz.
\end{lem}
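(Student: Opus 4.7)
The plan is to prove the two assertions separately, relying on the geodesic convexity of $C$ and Proposition \ref{geoconv} (convexity of the distance along geodesics) for the first, and a straightforward triangle inequality argument for the second.

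For the geodesic convexity of $a\mapsto \dist(a,C)$, I would fix $a,b\in P$, $t\in[0,1]$ and $\epsilon>0$, and choose $c_a,c_b\in C$ with $d(a,c_a)\leq \dist(a,C)+\epsilon$ and $d(b,c_b)\leq \dist(b,C)+\epsilon$. Since $C$ is geodesically convex, the geodesic $\gamma_{c_a,c_b}$ lies entirely in $C$, so in particular $\gamma_{c_a,c_b}(t)\in C$. Applying Proposition \ref{geoconv} to the two geodesics $\gamma_{a,b}$ and $\gamma_{c_a,c_b}$, we obtain
\begin{align*}
\dist(\gamma_{a,b}(t),C)&\leq d(\gamma_{a,b}(t),\gamma_{c_a,c_b}(t))\\
&\leq (1-t)d(a,c_a)+td(b,c_b)\\
&\leq (1-t)\dist(a,C)+t\dist(b,C)+\epsilon.
\end{align*}
Letting $\epsilon\to 0$ yields the desired inequality.

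For the 1-Lipschitz property, given any $a,b\in P$ and any $c\in C$, the triangle inequality gives $\dist(a,C)\leq d(a,c)\leq d(a,b)+d(b,c)$. Taking the infimum over $c\in C$ on the right yields $\dist(a,C)-\dist(b,C)\leq d(a,b)$, and swapping the roles of $a$ and $b$ gives $|\dist(a,C)-\dist(b,C)|\leq d(a,b)$.

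I do not foresee a real obstacle here: both statements are standard facts about distance functions to convex sets in a geodesic metric space and depend only on tools already established in the excerpt, namely the geodesic convexity of $C$ and Proposition \ref{geoconv}. The only point to be slightly careful about is that the geodesics in $(P,d)$ are not unique as shortest paths, but this does not affect the argument since we only use the specific geodesics $\gamma_{a,b}$ given by formula (\ref{geodesic}), which are length minimizing and for which Proposition \ref{geoconv} applies.
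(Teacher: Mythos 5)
Your proposal is correct and follows essentially the same argument as the paper: approximate optimal points in $C$, use the geodesic convexity of $C$ together with Proposition \ref{geoconv} applied to $\gamma_{a,b}$ and the geodesic joining the approximating points, and obtain the Lipschitz bound from the triangle inequality. No gaps.
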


\begin{proof}
Let $\epsilon >0$ and let $e,f \in C$ such that $d(a,e)<d(a,C)+\frac{\epsilon}{2}$ and $d(b,f)<d(a,C)+\frac{\epsilon}{2}$. Since $\gamma_{e,f}$ lies in $C$ and the distance along geodesics is convex (Proposition \ref{geoconv}), we have for $t\in[0,1]$
\begin{align*}
\dist(\gamma_{a,b}(t),C)&\leq \dist(\gamma_{a,b}(t),\gamma_{e,f}(t)) \leq (1-t)d(a,e)+td(b,f)\\
&\leq (1-t)\dist(a,C)+t\dist(b,C)+\epsilon , 
\end{align*}
Taking $\epsilon >0$ arbitrarily small we get the inequality. Observe also that 
\begin{align*}d(a,C)\leq \inf_{c\in C}(d(a,b)+d(b,c))=d(a,b)+d(b,C),\end{align*}
so that by symmetry we get the Lipschitz bound.
\end{proof}

\begin{rem}
The last two Propositions are valid for more general GCB-spaces (see \cite{schlicht} for details and definitions) and the Lipschitz bound is valid for arbitrary isometric actions on metric spaces.
\end{rem}

We next use the fact that the diameter of the orbit of a point and the distance of a point to a geodesically convex subset are geodesically convex functions to prove a geometric interpolation theorem. This extends results proved by Pisier using complex interpolation techniques \cite{pisier3}*{Lemmas 2.2 and 2.3}. 

\begin{thm}\label{geomint} 
If $H$ is a uniformly bounded group, $\gamma_t=\gamma_{r^2,s^2}(t)$ is the geodesic connecting positive invertible elements $r^2$ and $s^2$ and $H_t=\gamma_t^{-1/2}H\gamma_t^{1/2}$ is the one-parameter family of groups between the group $r^{-1}Hr$ and the group $s^{-1}Hs$ then
\begin{align*}|H_t|\leq |r^{-1}Hr|^{1-t}|s^{-1}Hs|^{t}\end{align*}
If $H$ is also unitarizable, then
\begin{align*}\Sim(H_t)\leq \Sim(r^{-1}Hr)^{1-t}\Sim(s^{-1}Hs)^{t}\end{align*}
In particular, if $s$ is a positive unitarizer such that $d\left(\id,P^H\right)=d(\id,s^2)$, then the one-parameter family of groups $H_t=s^{-t}Hs^t$ satisfies 
\begin{align*}|H_t|\leq |H|^{1-t}\textrm{ and }\Sim(H_t)=\Sim(H)^{1-t}.\end{align*}
\end{thm}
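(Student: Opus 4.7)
The plan is to reduce each statement at the generic $\gamma_t$ back to a statement at $\id$ by applying the isometric action $I$, and then to exploit the geodesic convexity of the functions $D_H$ and $\dist(\cdot,P^H)$ furnished by Lemmas \ref{lemdh} and \ref{lemdist}.

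First I would observe the following translation principle. Setting $f=\gamma_t^{1/2}$, we have $H_t=f^{-1}Hf$ and $f^{-1}\gamma_t f^{-1*}=\gamma_t^{-1/2}\gamma_t\gamma_t^{-1/2}=\id$. By Proposition \ref{tranorb2}, $I_{f^{-1}}$ maps $\oo_H(\gamma_t)$ onto $\oo_{H_t}(\id)$ and $P^H$ onto $P^{H_t}$. Since $I_{f^{-1}}$ is an isometry (Proposition \ref{invmetricap}), this yields
\begin{align*}
D_{H_t}(\id)=D_H(\gamma_t),\qquad \dist(\id,P^{H_t})=\dist(\gamma_t,P^H).
\end{align*}
Applied to the endpoints, the same argument (with $f=r$ or $f=s$) gives $D_H(r^2)=2\log|r^{-1}Hr|$, $D_H(s^2)=2\log|s^{-1}Hs|$, and analogously for the similarity number via Theorem \ref{distdiam}.

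Next I would invoke geodesic convexity. By Lemma \ref{lemdh}, $D_H$ is geodesically convex, so
\begin{align*}
2\log|H_t|=D_H(\gamma_t)\le (1-t)\cdot 2\log|r^{-1}Hr|+t\cdot 2\log|s^{-1}Hs|,
\end{align*}
which, after dividing by $2$ and exponentiating, gives the first stated inequality. For the second, I note that by Proposition \ref{totalgeod} the set $P^H$ is totally geodesic, hence geodesically convex, so Lemma \ref{lemdist} applies to $a\mapsto\dist(a,P^H)$. Combining with Theorem \ref{distdiam} exactly as above yields $\Sim(H_t)\le\Sim(r^{-1}Hr)^{1-t}\Sim(s^{-1}Hs)^{t}$.

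Finally, for the particular case with $r=\id$: the geodesic from $\id$ to $s^2$ is $\gamma_t=s^{2t}$, so $\gamma_t^{1/2}=s^{t}$ and $H_t=s^{-t}Hs^t$. Since $s$ is a unitarizer of $H$, the group $s^{-1}Hs$ is unitary, so $|s^{-1}Hs|=\Sim(s^{-1}Hs)=1$, and the two general inequalities specialize to $|H_t|\le|H|^{1-t}$ and $\Sim(H_t)\le\Sim(H)^{1-t}$. The main point, and the step I expect to carry the real content here, is the matching lower bound $\Sim(H_t)\ge\Sim(H)^{1-t}$: the interpolation argument only produces the upper bound, and the reverse direction must use that $s^2$ realizes the minimum distance to $P^H$. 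For any $p\in P^H$, the triangle inequality together with $d(\id,p)\ge d(\id,s^2)=\log\Sim(H)$ gives
\begin{align*}
d(\gamma_t,p)\ge d(\id,p)-d(\id,\gamma_t)\ge\log\Sim(H)-t\log\Sim(H)=(1-t)\log\Sim(H),
\end{align*}
so $\dist(\gamma_t,P^H)\ge(1-t)\log\Sim(H)$, whence $\log\Sim(H_t)=\dist(\gamma_t,P^H)=(1-t)\log\Sim(H)$ and the claimed equality follows after exponentiating.
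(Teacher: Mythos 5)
Your proof is correct and follows essentially the same route as the paper's: reduce to the basepoint $\id$ via the isometric action and Proposition \ref{tranorb2}, apply the geodesic convexity of $D_H$ and of $\dist(\cdot,P^H)$ from Lemmas \ref{lemdh} and \ref{lemdist} together with Theorem \ref{distdiam}, and obtain the equality $\Sim(H_t)=\Sim(H)^{1-t}$ from the fact that $s^2$ realizes the distance to $P^H$ along the geodesic $s^{2t}$. Your explicit triangle-inequality argument for the lower bound is exactly the content the paper compresses into ``$s^2$ minimizes the distance from points of $P^H$ to any point of the geodesic,'' and your appeal to Proposition \ref{totalgeod} merely makes explicit the geodesic convexity of $P^H$ that the paper asserts.
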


\begin{proof}
By Proposition \ref{invmetricap}, the action $I$ is isometric, so that by Proposition \ref{tranorb} for $f\in G$ and $b\in P$
\begin{align*}
D_{f^{-1}Hf}(b)&=\diam(\oo_{f^{-1}Hf}(b))=\diam(f^{-1}\oo_H(fbf^*)f^{-1*}) \\
&=\diam(\oo_H(fbf^*))=D_H(fbf^*). 
\end{align*} 
Now, using the fact that $\gamma_t=\gamma_{r^2,s^2}(t)$ is a geodesic and the geodesic convexity of $D_H$ proved in Lemma \ref{lemdh}
\begin{align*}
D_{H_t}(\id)&=D_{\gamma_t^{-1/2}H\gamma_t^{1/2}}(\id)=D_H(\gamma_t) \\
&=D_H(\gamma_{r^2,s^2}(t))\leq (1-t)D_H\left(r^2\right) + tD_{H}\left(s^2\right) \\
&=(1-t)D_{r^{-1}Hr}(\id) + tD_{s^{-1}Hs}(\id). 
\end{align*} 
Exponentiating this equation and using Proposition \ref{distdiam}, we get
\begin{align*}|H_t|^2\leq |r^{-1}Hr|^{2(1-t)}|s^{-1}Hs|^{2t}\end{align*}
and therefore $|H_t|\leq |r^{-1}Hr|^{1-t}|s^{-1}Hs|^{t}$.

By Proposition \ref{tranorb} and Proposition \ref{invmetricap}, we get for $f\in G$ and $b\in P$
\begin{align*}\dist\left(b,P^{f^{-1}Hf}\right)=\dist\left(b,f^{-1}P^Hf^{-1*}\right)=\dist\left(fbf^*,P^H\right).\end{align*}
Since $P^H$ is geodesically convex, we can use Lemma \ref{lemdist} to get
\begin{align*}
\dist\left(\id,P^{H_t}\right)&=\dist\left(\id,P^{\gamma_t^{-1/2}H\gamma_t^{1/2}}\right)\\
&=\dist\left(\gamma_t,P^H\right)=\dist\left(\gamma_{r^2,s^2}(t),P^H\right)\\
&\leq (1-t)d\left(r^2,P^H\right) + td\left(s^2,P^H\right) \\
&= (1-t)\dist\left(\id,P^{r^{-1}Hr}\right)+t\dist\left(\id,P^{s^{-1}Hs}\right).
\end{align*} 
Exponentiating this inequality we obtain
\begin{align*}\Sim(H_t)\leq \Sim\left(r^{-1}Hr\right)^{1-t}\Sim\left(s^{-1}Hs\right)^{t}.\end{align*}
Now, if the geodesic is $\gamma_{1,s^2}(t)=s^{2t}$, then since $H_1=s^{-1}Hs$ is a group of unitaries $|H_t|\leq |H|^{1-t}$. 

In the inequality for the similarity number we can get instead an equality: since $s^2$ is a point in $P^H$ minimizing the distance from $\id$ to $P^H$ and geodesic have minimal lenght, $s^2$ minimizes distance between the points in $P^H$ to any point in the geodesic $\gamma_{\id,s^2}(t)=s^{2t}$. Therefore
\begin{align*}\dist\left(\id,P^{H_t}\right)=\dist\left(\id,P^{\gamma_t^{-1/2}H\gamma_t^{1/2}}\right)=\dist\left(\gamma_t,P^H\right)=(1-t)\dist\left(\id,P^H\right).\end{align*}
Exponentiating this equation and using Proposition \ref{distdiam} we get $\Sim(H_t)=\Sim(H)^{1-t}$.
\end{proof}

\begin{cor}
In the case of a completely bounded unital homomorphism $\pi:A\to \B(\h)$, we can define a family of homomorphisms $\pi_t=\Ad_{s^t}\circ \pi$ such that
\begin{align*}\|\pi_t\|\leq \|\pi\|^{1-t}\mbox{  and  } \|\pi_t\|_{c.b.}=\|\pi\|_{c.b.}^{1-t}.\end{align*}
\end{cor}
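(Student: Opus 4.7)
The plan is to translate the corollary's assertions into the geometric language of Chapter 3 via the group of images of unitaries $H=\pi(U_A)$, and then to invoke the sharp ``in particular'' part of Theorem \ref{geomint}. First, I would note that by Proposition \ref{simcb} we have $\|\pi\|_{c.b.}=\Sim(H)$, while the Russo--Dye corollary stated just before Theorem \ref{distdiam} gives $\|\pi\|=|H|$. Since $\pi$ is completely bounded, $\Sim(H)<\infty$, so $H$ is unitarizable and $P^H\neq\emptyset$.

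Second, I would invoke Lemma \ref{minunit} (assuming $\h$ is separable and $A$ acts on it, as is standard in this setting) to produce an $a\in P^H$ with $d(\id,a)=\dist(\id,P^H)$, and then rescale $a$ by a positive scalar using Remark \ref{simdist} so that it has symmetric spectrum. Setting $s:=a^{1/2}$ yields a positive unitarizer of $H$ with $d(\id,s^2)=\dist(\id,P^H)$, exactly the hypothesis for the ``in particular'' clause of Theorem \ref{geomint}. That clause delivers, for $H_t:=s^{-t}Hs^t$, the bound $|H_t|\leq|H|^{1-t}$ together with the equality $\Sim(H_t)=\Sim(H)^{1-t}$.

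Finally, I would define $\pi_t:=\Ad_{s^{-t}}\circ\pi$, so that $\pi_0=\pi$ and $\pi_1=\Ad_{s^{-1}}\circ\pi$ is the $*$-homomorphism afforded by $s$ (the exponent sign in the statement of the corollary appears to be a typographical slip relative to the paper's convention that $s^{-1}Hs$ is the unitary group). Each $\pi_t$ is then a completely bounded unital homomorphism with $\pi_t(U_A)=H_t$, so applying Proposition \ref{simcb} and the Russo--Dye identity to $\pi_t$ gives
\[\|\pi_t\|_{c.b.}=\Sim(H_t)=\Sim(H)^{1-t}=\|\pi\|_{c.b.}^{1-t}\quad\text{and}\quad\|\pi_t\|=|H_t|\leq|H|^{1-t}=\|\pi\|^{1-t}.\]
The only substantive obstacle is the existence of a norm-attaining positive unitarizer, handled by Lemma \ref{minunit} in the separable von Neumann setting; without it one only obtains the inequality $\Sim(H_t)\leq\Sim(H)^{1-t}$ from Theorem \ref{geomint}, which would weaken the $c.b.$ equality to an inequality.
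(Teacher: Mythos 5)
Your proposal is correct and follows essentially the paper's intended route: the corollary is stated without a separate proof precisely because it is the translation, via Proposition \ref{simcb} and the Russo--Dye remark, of the ``in particular'' clause of Theorem \ref{geomint} applied to $H=\pi(U_A)$. Your extra step of securing a distance-attaining positive unitarizer through Lemma \ref{minunit} and Remark \ref{simdist} (plus the note on the sign convention in $\Ad_{s^t}$) merely makes explicit hypotheses that the paper leaves implicit.
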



Pisier used bounds that relate the similarity number and size of groups to characterize classes of groups and algebras, see Theorem 1 in \cite{pisier1} and the discussion following that theorem. If we take the logarithm in inequalities of the form 
\begin{align*}\Sim(H)\leq K|H|^{\alpha}\end{align*}
for constants $K>1$ and $\alpha>0$, we get by Proposition \ref{distdiam}
\begin{align*}\dist(\id,P^H)\leq \log(K) + \frac{\alpha}{2}D_H(\id).\end{align*}
Therefore, composing group and algebra representations restricted to unitary groups
\begin{align*}\Gamma\xrightarrow{\rho} G\to \Isom(P),\qquad U_A\xrightarrow{\pi|_{U_A}} G\to \Isom(P)\end{align*}
Theorem 1 in \cite{pisier1} has the following translations in geometric terms:

\begin{thm}\label{pisiersim} The following holds:
\begin{itemize}
\item A discrete group $\Gamma$ is amenable if and only if for every uniformly bounded representation $\rho:\Gamma\to \B(\h)$ the inequality $\dist\left(\id,P^{\rho(\Gamma)}\right)\leq D_{\rho(\Gamma)}(\id)$ holds.
\item A discrete group $\Gamma$ is finite if and only if there is a $c>0$ such that for every uniformly bounded representation $\rho:\Gamma\to \B(\h)$ the inequality $\dist(\id,P^{\rho(\Gamma)})\leq c+ \frac{1}{2}D_{\rho(\Gamma)}(\id)$ holds. 
\item A $C^*$-algebra $A$ is nuclear if and only if for every unital completely bounded homomorphism $\psi:A\to \B(\h)$  
the inequality $\dist(\id,P^{\psi(U_A)})\leq D_{\psi(U_A)}(\id)$ holds, where $U_A$ is the group of unitaries of $A$.
\item A $C^*$-algebra $A$ is finite dimensional if and only if there is a $c>0$ such that for every unital completely bounded homomorphism $\psi:A\to \B(\h)$
the inequality $\dist(\id,P^{\psi(U_A)})\leq c+ \frac{1}{2}D_{\psi(U_A)}(\id)$ holds.  
\end{itemize}
\end{thm}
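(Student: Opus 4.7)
The plan is to derive each of the four equivalences as a direct logarithmic translation of Pisier's Theorem 1 in \cite{pisier1}, whose content is already phrased in terms of bounds of the form $\Sim(H)\leq K|H|^{\alpha}$. No new analytic input is required; the geometric identities of Theorem \ref{distdiam} do all the work, together with the algebra--side dictionary already assembled in Proposition \ref{simcb} and the Russo--Dye remark.

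First I would recall the statements of Pisier's characterizations in their standard multiplicative form: $\Gamma$ is amenable iff every uniformly bounded representation $\rho$ satisfies $\Sim(\rho(\Gamma))\leq |\rho(\Gamma)|^{2}$; $\Gamma$ is finite iff there exists a constant $K>1$, depending only on $\Gamma$, such that every uniformly bounded $\rho$ satisfies $\Sim(\rho(\Gamma))\leq K|\rho(\Gamma)|$; and analogously for nuclear and finite-dimensional $C^*$-algebras $A$, with uniformly bounded representations replaced by unital completely bounded homomorphisms $\pi:A\to \B(\h)$ and the size/similarity applied to $\pi(U_A)$. For the algebra case, Proposition \ref{simcb} together with the Russo--Dye remark following it ensure that $\|\pi\|_{c.b.}=\Sim(\pi(U_A))$ and $\|\pi\|=|\pi(U_A)|$, so Pisier's bounds specialise literally to $\Sim(H)\leq K|H|^{\alpha}$ with $H=\pi(U_A)$.

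The translation itself is then immediate. Taking logarithms in the inequality $\Sim(H)\leq K|H|^{\alpha}$ and substituting the identities
\begin{align*}
\log \Sim(H) \;=\; \dist(\id,P^{H}), \qquad 2\log|H| \;=\; D_{H}(\id)
\end{align*}
from Theorem \ref{distdiam} yields
\begin{align*}
\dist(\id,P^{H})\;\leq\; \log(K) \,+\, \tfrac{\alpha}{2}\, D_{H}(\id).
\end{align*}
Since $\log\colon(0,\infty)\to \R$ is a strictly increasing bijection, this inequality is equivalent to the original one; hence the biconditionals in Pisier's theorem transfer without loss to biconditionals in the geometric formulation. Plugging $(K,\alpha)=(1,2)$ into the first and third bullets recovers the amenable and nuclear statements, while $(K,\alpha)=(e^{c},1)$ recovers the finite and finite-dimensional statements, with the additive constant $c$ absorbing $\log K$.

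The only step requiring care is the routine bookkeeping of matching universal quantifiers: for groups, $\rho$ ranges over uniformly bounded representations exactly as $\rho(\Gamma)$ ranges over uniformly bounded subgroups of $G_{\B(\h)}$, so Pisier's "for every $\rho$" becomes "for every $H=\rho(\Gamma)$"; for algebras, the analogous match is provided by Proposition \ref{simcb}. There is therefore no genuine obstacle in the present theorem, since all the hard analytic content lives upstream in Pisier's original result; what is new here is purely the geometric interpretation furnished by Theorem \ref{distdiam}.
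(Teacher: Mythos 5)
Your proposal is correct and coincides with the paper's own treatment: the theorem is presented there precisely as the logarithmic translation of Pisier's Theorem 1 via the identities $\dist(\id,P^H)=\log(\Sim(H))$ and $D_H(\id)=2\log|H|$ from Theorem \ref{distdiam}, together with the dictionary $\|\pi\|_{c.b.}=\Sim(\pi(U_A))$, $\|\pi\|=|\pi(U_A)|$ from Proposition \ref{simcb} and the Russo--Dye remark. Your constant bookkeeping $(K,\alpha)=(1,2)$ and $(e^{c},1)$ matches the four bullets exactly.
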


In the composed actions $\Gamma\xrightarrow{\pi} G\to \Isom(P)$ if $\Gamma$ is amenable, then the distance of each point $a\in P$ to the fixed point set is less or equal than the diameter of the orbit of that point. This proposition is well known, we state and prove it in our geometric setting:

\begin{prop}\label{distamenable}
If $\pi:\Gamma\to G$ is a strongly continuous uniformly bounded representation of an amenable locally compact group $\Gamma$ and if we denote $H=\pi(\Gamma)$ then
\begin{align*}\dist\left(a,P^{\pi(\Gamma)}\right)\leq D_{\pi(\Gamma)}(a)\end{align*}
for every $a\in P$.
\end{prop}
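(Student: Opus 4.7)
The plan is to reduce to the basepoint $a = \id$ via an isometric translation and then invoke the classical Dixmier--Day averaging argument for amenable group representations.

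For the reduction, define $\pi'\colon \Gamma \to G$ by $\pi'(\gamma) = a^{-1/2}\pi(\gamma)a^{1/2}$. This is again a strongly continuous uniformly bounded representation, with image $H' = a^{-1/2} H a^{1/2}$. By Propositions~\ref{invmetricap} and~\ref{tranorb2} the translation $I_{a^{-1/2}}$ is an isometry carrying $a$ to $\id$ and $P^H$ to $P^{H'}$, so $\dist(a, P^H) = \dist(\id, P^{H'})$ and $D_H(a) = D_{H'}(\id)$. It therefore suffices to prove $\dist(\id, P^{H'}) \leq D_{H'}(\id)$.

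For this, let $m$ be a left-invariant mean on the amenable locally compact group $\Gamma$ and define a positive sesquilinear form on $\h$ by
\begin{align*}
\langle T\xi, \eta \rangle := m\bigl(\gamma \mapsto \langle \pi'(\gamma)\xi, \pi'(\gamma)\eta \rangle\bigr).
\end{align*}
Since $\pi'(\gamma)^{-1} = \pi'(\gamma^{-1})$ and both have norm at most $|H'|$, one obtains $|H'|^{-2}\|\xi\|^2 \leq \langle T\xi,\xi\rangle \leq |H'|^2\|\xi\|^2$, so the form is represented by a positive invertible operator $T$ satisfying $|H'|^{-2}\id \leq T \leq |H'|^2\id$. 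Left-invariance of $m$ yields $\pi'(\gamma)^*T\pi'(\gamma) = T$ for every $\gamma \in \Gamma$, so $T \in P^{H'}$. Since the spectrum of $T$ is contained in $[|H'|^{-2}, |H'|^2]$, Theorem~\ref{distdiam} then gives
\begin{align*}
\dist(\id, P^{H'}) \leq d(\id, T) = \|\log T\| \leq 2\log|H'| = D_{H'}(\id),
\end{align*}
which by the reduction above is the desired inequality.

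The main technical point is to ensure that the averaged operator $T$ belongs to the algebra $A$ containing $H$ and not merely to $\B(\h)$. This is automatic when $A$ is a von Neumann algebra, since $T$ arises as a weak-operator limit of convex combinations of elements of $A$; in the general $C^*$-algebraic setting the proposition should be read with $P$ interpreted inside the enveloping $\B(\h)$, which is the customary setting for this well-known averaging result.
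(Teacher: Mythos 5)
Your argument is correct in substance but takes a genuinely different route at the key step. The reduction to $a=\id$ by the isometric translation is exactly the paper's; after that, the paper does not average matrix coefficients. It notes that $\oo_H(\id)=\{hh^*:h\in H\}\subseteq[|H|^{-2}\id,|H|^2\id]$, obtains a fixed point inside $\overline{conv}^w(\oo_H(\id))$ by citing Fack (or, alternatively, the fixed-point characterization of amenability applied to the affine action $g\cdot b=\pi(g)b\pi(g)^*$ on this weakly compact convex set), and then reads off $d(\id,a)\leq 2\log|H|$ because the operator interval $[|H|^{-2}\id,|H|^2\id]$ is precisely the ball $B[\id,2\log|H|]$. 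Your Dixmier--Day construction with an invariant mean is the explicit, self-contained version of the same averaging idea: it buys you the spectral bounds on the averaged operator directly, while the paper's version buys brevity and keeps the fixed point visibly inside the weak closure of the convex hull of the orbit. The caveat you raise about the fixed point possibly leaving the $C^*$-algebra applies equally to the paper's proof (weak closures are involved there too), so reading $P$ inside $\B(\h)$ or a von Neumann algebra is indeed the intended setting.

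Two small repairs are needed. First, the relation your mean produces, $\pi'(\gamma)^*T\pi'(\gamma)=T$, is not the fixed-point condition for the action $I_h(b)=hbh^*$: it says that $T^{1/2}\pi'(\gamma)T^{-1/2}$ is unitary for all $\gamma$, hence by Proposition \ref{fixed} it is $T^{-1}$ (the square of the positive unitarizer $T^{-1/2}$), not $T$, that lies in $P^{H'}$. This is harmless: $|H'|^{-2}\id\leq T^{-1}\leq|H'|^2\id$ as well, and $d(\id,T^{-1})=\|\log T\|\leq 2\log|H'|$, so your estimate is unchanged, but the membership claim should be corrected. Second, with the coefficient $\gamma\mapsto\langle\pi'(\gamma)\xi,\pi'(\gamma)\eta\rangle$ the invariance computation uses $\pi'(\gamma)\pi'(\gamma_0)=\pi'(\gamma\gamma_0)$, i.e.\ right translations, so you need a right-invariant (or two-sided invariant) mean, or else average $\gamma\mapsto\langle\pi'(\gamma^{-1})\xi,\pi'(\gamma^{-1})\eta\rangle$; amenability of a locally compact group provides such a mean on bounded continuous functions, where your coefficients live by strong continuity and uniform boundedness.
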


\begin{proof}
Since the translation $b\mapsto a^{-\frac12}ba^{-\frac12}$, $P\to P$ is isometric and $g\mapsto a^{-\frac12}ga^{\frac12}$, is continuous if we endow $G$ with the strong operator topology, by Proposition \ref{invmetricap} and Proposition \ref{tranorb}
\begin{align*}\dist\left(\id,P^{a^{-\frac12}Ha^{\frac12}}\right)=\dist\left(\id,a^{-\frac12}P^Ha^{-\frac12}\right)=\dist\left(a,P^H\right)\end{align*}
and
\begin{align*}
D_{a^{-\frac12}Ha^{\frac12}}(\id)&=\diam(\oo_{a^{-\frac12}Ha^{\frac12}}(\id))=\diam(a^{-\frac12}\oo_H(a^{\frac12}\id a^{\frac12})a^{-\frac12}) \\
&=\diam(\oo_H(a^{\frac12}a^{\frac12}))=D_H(a), 
\end{align*}
so the problem is reduced to proving that $\dist(\id,P^H)\leq D_H(\id)$. Note that 
\begin{align*}\oo_H(\id)=\{hh^*:h\in H\}\subseteq [|H|^{-2}\id,|H|^2\id]\end{align*}
and that there is a fixed point $a\in P^{\pi(\Gamma)}$ in the weak closure of the linear convex hull of the orbit of $\id$, see \cite{fack}*{Proposition 1. (ii)}. Alternatively, we could have used the fixed point property characterization of amenable groups noting that $g\cdot b=\pi(g)b\pi(g)^*$ is a continuous action on the weakly compact convex set $\overline{conv}^w(\oo_H(\id))$. Hence,
\begin{align*}a \in \overline{conv}^w(\oo_H(\id))\subseteq [|H|^{-2}\id,|H|^2\id]=B[\id,2\log(|H|)],\end{align*}
so that
\begin{align*}\dist(\id,P^H)\leq d(\id,a)\leq 2\log(|H|)=D_H(\id).\end{align*}
\end{proof}

\begin{prop}\label{diagsim}
Consider a Hilbert space direct sum $\h=\oplus_{n\in \N}\h_n$, the algebra $\B(\h)$ and its diagonal subalgebra $A=\oplus_{n\in \N}\B(\h_n)$.\par 
If $H=\oplus_{n\in \N}H_n$ is a unitarizable group of invertibles in $A$, then 
\begin{align*}
\Sim_A(H)&=\min\{\|r\|\|r^{-1}\|:r\in A, r \mbox{ unitarizes }H\}\\
&=\sup_{n\in \N}\Sim_{\B(\h_n)}(H_n)=\sup_{n\in \N}\min\{\|r_n\|\|r_n^{-1}\|:r_n\in \B(\h_n), r_n \mbox{ unitarizes }H_n\}\\
&=\Sim_{\B(\h)}(H)=\inf\{\|s\|\|s^{-1}\|:s\in \B(\h), s \mbox{ unitarizes }H\}
\end{align*}
\end{prop}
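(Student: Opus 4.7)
The plan is to establish two chains of equalities, $\Sim_A(H) = \sup_n \Sim_{\B(\h_n)}(H_n)$ and $\Sim_A(H) = \Sim_{\B(\h)}(H)$, with the infimum in $A$ realized as a minimum. By Remark \ref{posunit}, the infima may be computed over positive unitarizers. The key observation is that, with respect to the decomposition $\h = \oplus_n\h_n$, every element of $A = \oplus_n\B(\h_n)$ is block-diagonal, so a positive $r \in A$ has the form $r = \oplus_n r_n$; since $H = \oplus_n H_n$ acts block-diagonally, $r$ unitarizes $H$ if and only if each $r_n$ unitarizes $H_n$ inside $\B(\h_n)$.

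For the first equality, the inequality $\Sim_A(H) \geq \sup_n\Sim_{\B(\h_n)}(H_n)$ is immediate from $\|r_n\|\|r_n^{-1}\| \leq \|r\|\|r^{-1}\|$, applied to every positive unitarizer $r = \oplus r_n$ of $H$ in $A$. For the reverse, I would apply Lemma \ref{minunit} inside each separable von Neumann algebra $\B(\h_n)$ to obtain a positive unitarizer realizing $\Sim_{\B(\h_n)}(H_n)$, then rescale it (Remark \ref{simdist}) to have symmetric spectrum, so that $\|r_n\|=\|r_n^{-1}\|=\Sim_{\B(\h_n)}(H_n)^{1/2}$. The inequality already established, together with unitarizability of $H$ in $A$, ensures that $M := \sup_n\Sim_{\B(\h_n)}(H_n)$ is finite, so $r := \oplus_n r_n$ defines a bounded positive invertible element of $A$, which unitarizes $H$ and satisfies $\|r\|\|r^{-1}\| = M$. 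This simultaneously shows $\Sim_A(H) \leq M$ and realizes the infimum as a minimum.

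For the second equality, the inequality $\Sim_{\B(\h)}(H) \leq \Sim_A(H)$ is trivial since $A \subseteq \B(\h)$. For the reverse, given a positive unitarizer $s \in \B(\h)$, I would set $b = s^2 \in P^H_{\B(\h)}$ (Proposition \ref{fixed}) and apply the normal conditional expectation $E : \B(\h) \to A$ associated to the block projections $P_n$ onto $\h_n$, namely $E(b) = \oplus_n P_n b P_n|_{\h_n}$. Since each $P_n$ commutes with every $h\in H$, a direct computation yields $h E(b) h^* = E(b)$, hence $E(b) \in P^H_A$. The estimates $\|E(b)\| \leq \|b\|$ and $\|E(b)^{-1}\| \leq \|b^{-1}\|$ follow respectively from contractivity of the compressions $T \mapsto P_n T P_n$ and from the operator inequality $b \geq \|b^{-1}\|^{-1}\id$, which restricts block-wise. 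Then $E(b)^{1/2} \in A$ is a positive unitarizer of $H$ (again by Proposition \ref{fixed}) with $\|E(b)^{1/2}\|\|E(b)^{-1/2}\| \leq \|s\|\|s^{-1}\|$, and passing to the infimum over $s$ yields $\Sim_A(H) \leq \Sim_{\B(\h)}(H)$.

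The main obstacle I anticipate is this last compression step: verifying that the block-diagonal expectation $E$ preserves both the fixed-point property under $I_H$ and the norm product $\|b\|\|b^{-1}\|$. This hinges on the commutation of the projections $P_n$ with the block-diagonal elements of $H$ and on the fact that the lower spectral bound of $b$ is inherited by each compression $P_n b P_n$. Once this is in hand, all four expressions in the statement coincide, and the minima are realized by the unitarizers constructed in the second paragraph.
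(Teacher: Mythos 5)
Your proposal is correct, but the decisive inequality $\Sim_A(H)\le \Sim_{\B(\h)}(H)$ is obtained by a genuinely different mechanism than in the paper. The paper works directly with a (non-diagonal) unitarizer $s\in\B(\h)$: since each $\h_n$ is $H$-invariant, $shs^{-1}$ restricts to a unitary of the closed subspace $s(\h_n)$, and after identifying $s(\h_n)$ with $\h_n$ by an auxiliary unitary $u_n$ one gets unitarizers $r_n=u_n\, s|_{\h_n}$ of $H_n$ with $\|r_n\|\|r_n^{-1}\|\le\|s\|\|s^{-1}\|$, which are then scaled to symmetric spectrum and reassembled, much as in your first chain. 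You instead pass to the fixed point $b=s^2\in P^H$ (Proposition \ref{fixed}) and compress by the block-diagonal conditional expectation $E(b)=\oplus_n P_n b P_n|_{\h_n}$; since the projections $P_n$ commute with the block-diagonal group $H$, $E$ maps $P^H$ into $P^H\cap A$ and does not increase $\|b\|\,\|b^{-1}\|$, so $E(b)^{1/2}$ is a positive unitarizer in $A$ at least as good as $s$. Your route stays entirely inside the fixed-point formalism of Section \ref{prel}, avoids the dimension count and the choice of the unitaries $u_n$, and isolates the structural feature that matters, namely an $H$-equivariant positive unital projection onto the subalgebra; this makes it portable to other subalgebras $B\subseteq\B(\h)$ admitting such an expectation (compare the hypothesis $\Sim_B(H)=\Sim_{\B(\h)}(H)$ in Theorem \ref{thmacs}), whereas the paper's argument uses only elementary spatial facts and does not need positivity of $s$. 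Two small caveats: your appeal to Lemma \ref{minunit} to realize the minima tacitly assumes each $\h_n$ is separable, which the proposition does not state (the paper is silent on attainment; either observe that the weak-compactness argument of that lemma works without separability, or use $\varepsilon$-minimizers if only the equality of the four quantities is needed), and the finiteness of $M=\sup_n\Sim_{\B(\h_n)}(H_n)$ is best drawn from your already-established inequality $\Sim_A(H)\le\Sim_{\B(\h)}(H)$ in case ``unitarizable'' is read in $\B(\h)$ rather than in $A$ --- a harmless reordering.
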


\begin{proof}
If $s$ is an invertible in $\B(\h)$ unitarizing $H$, then $shs^{-1}|_{s(\h_n)}:s(\h_n)\to s(\h_n)$ is unitary for each $n\in \N$ and $h\in H$. Since $s(\h_n)$ and $\h_n$ have the same Hilbert space dimension, there is a unitary $u_n:s(\h_n)\to \h_n$. Hence, 
\begin{align*}r_n=u_n\circ s|_{h_n}:\h_n\to s(\h_n) \to \h_n\end{align*}
is a unitarizer of $H_n$ with $\|r_n\|\|r_n^{-1}\|\leq\|s\|\|s^{-1}\|$.\par
By scaling the $r_n$ so that they have symmetric spectrum we get that $r=\oplus_{n\in \N}r_n\in A$ is a unitarizer of $H$ with $\|r\|\|r^{-1}\|= \sup_{n\in \N}\|r_n\|\|r_n^{-1}\| \leq\|s\|\|s^{-1}\|$.\par 
Hence, we get $\Sim_A(H)\leq \Sim_{\B(\h)}(H)$. The inequality $\Sim_A(H)\leq \Sim_{\B(\h)}(H)$ is trivial, and it is easy to verify that $\Sim_A(H)=\sup_{n\in \N}\Sim_{\B(\h_n)}(H_n)$.
\end{proof}
The following Proposition has also been proven in \cite{pisier4} algebraically. Here, we give a more geometric and intuitive proof
\begin{thm}\label{unitconst}
Let $\Gamma$ be a locally compact unitarizable group (i.e. every strongly continuous uniformly bounded representation is conjugate to a unitary representation). Then, there are constants $K\geq 1$ and $\alpha>0$ such that for every strongly continuous uniformly bounded representations $\pi:\Gamma\to \B(\h)$  we have 
\begin{align*}\dist(\id,P^{\pi(\Gamma)})\leq \log(K) + \frac{\alpha}{2}D_{\pi(\Gamma)}(\id)\end{align*}
or equivalently
\begin{align*}\Sim(\pi(\Gamma))\leq K|\pi(\Gamma)|^\alpha.\end{align*}
\end{thm}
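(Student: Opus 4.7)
\medskip
\noindent\textbf{Proof plan.} I would argue by contradiction, using two earlier results: the interpolation estimate of Theorem \ref{geomint}, which trades size against similarity number along a distinguished geodesic, and the supremum formula for diagonal direct sums in Proposition \ref{diagsim}. If no pair $(K,\alpha)$ worked, the plan would be to assemble a sequence of increasingly bad representations into a single strongly continuous uniformly bounded representation of $\Gamma$ with infinite similarity number, contradicting unitarizability.

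Concretely, assume that for every $n\in\N$ there is a strongly continuous uniformly bounded $\pi_n:\Gamma\to\B(\h_n)$ with $\log\Sim(\pi_n(\Gamma))>n\log|\pi_n(\Gamma)|+\log n$, and write $H_n=\pi_n(\Gamma)$, $\ell_n=\log|H_n|$, $\sigma_n=\log\Sim(H_n)$. Unitarizability of $\Gamma$ guarantees a positive unitarizer of each $H_n$, and Lemma \ref{minunit} together with Remark \ref{simdist} supply one, $s_n\in\B(\h_n)$, with $d(\id,s_n^2)=\sigma_n$. The last part of Theorem \ref{geomint} applied along the geodesic $t\mapsto s_n^{2t}$ then gives, for $H_{n,t}=s_n^{-t}H_n s_n^{t}$,
\[|H_{n,t}|\le|H_n|^{1-t}\qquad\text{and}\qquad\Sim(H_{n,t})=\Sim(H_n)^{1-t}.\]
Choosing $t_n\in[0,1]$ with $1-t_n=\min\{1,(\log 2)/\ell_n\}$ yields $|H_{n,t_n}|\le 2$ in every case, while $\sigma_n>n\ell_n+\log n$ forces $\log\Sim(H_{n,t_n})=(1-t_n)\sigma_n\to\infty$. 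I then set $\tilde\pi_n=\Ad_{s_n^{-t_n}}\circ\pi_n$, so that the $\tilde\pi_n$ are strongly continuous, uniformly bounded by $2$, with diverging similarity numbers.

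The direct sum $\tilde\pi=\bigoplus_n\tilde\pi_n$ on $\h=\bigoplus_n\h_n$ is uniformly bounded by $2$, and strong continuity follows per-block via dominated convergence on the square-summable decomposition of vectors. Hence $\tilde\pi$ is unitarizable by hypothesis on $\Gamma$, so $\Sim_{\B(\h)}(\tilde\pi(\Gamma))<\infty$; but $\tilde\pi(\Gamma)$ lies in the diagonal subalgebra $A=\bigoplus_n\B(\h_n)$, and Proposition \ref{diagsim} forces
\[\Sim_{\B(\h)}(\tilde\pi(\Gamma))=\sup_n\Sim(\tilde\pi_n(\Gamma))=+\infty,\]
the desired contradiction. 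Equivalence of the two displayed inequalities in the theorem is immediate from Theorem \ref{distdiam}. I expect the one delicate step to be the simultaneous calibration of the $t_n$: they must be chosen so that all rescaled groups share a common uniform bound while keeping divergent similarity numbers, and this is exactly the trade-off that Theorem \ref{geomint} encodes, replacing Pisier's complex-interpolation approach with a geometric one.
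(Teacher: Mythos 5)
Your proposal is correct and follows essentially the same route as the paper: negate the estimate, use the last part of Theorem \ref{geomint} (with the minimal unitarizer from Lemma \ref{minunit} and Remark \ref{simdist}) to rescale each bad representation so its image has size at most $2$ while its similarity number still diverges, then take the diagonal direct sum and contradict unitarizability via Proposition \ref{diagsim}. Your unified choice of $t_n$ and the explicit appeal to Lemma \ref{minunit} merely spell out details the paper leaves implicit.
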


\begin{proof}
Assume for contradiction the conclusion not to hold. Then, there are strongly continuous uniformly bounded representations $\pi_n:\Gamma\to \B(\h_n)$ with similarity number $\Sim(\pi_n(\Gamma))>n|\pi_n(\Gamma)|^n>n~\forall n\in\N$.\par 
Now for $n\in \N$, if $|\pi_n(\Gamma)|\leq 2$, we set $\rho_n=\pi_n$. Otherwise, by Theorem \ref{geomint} we know that there is some positive invertible $s_n\in \B(\h_n)$ and a parameter $t\in [0,1]$ such that $|\Ad_{s_n^{-t}}(\pi_n(\Gamma))|\leq|\pi_n(\Gamma)|^{1-t}=2$ and $\Sim(\Ad_{s_n^{-t}}(\pi_n(\Gamma)))=\Sim(\pi_n(\Gamma))^{1-t}$. Then
\begin{align*}\Sim(\Ad_{s_n^{-t}}(\pi_n(\Gamma)))=\Sim(\pi_n(\Gamma))^{1-t}>n^{1-t}|\pi_n(\Gamma)|^{(1-t)n}=n^{1-t}2^n>n.\end{align*}
We conclude that $\rho_n=\Ad_{s^{-t}}\circ\pi_n:\Gamma \to \B(\h_n)$ is a strongly continuous uniformly bounded representation with $\Sim(\rho_n(\Gamma))>n$ and $|\rho_n(\Gamma)|\leq 2$.\par 
Now, $\rho=\oplus_{n\in \N}\rho_n:\Gamma \to \oplus_{n\in \N} \B(\h_n)\subseteq \B(\oplus_{n\in \N} \h_n)$ is a strongly continuous representation with $|\rho(\Gamma)|\leq 2$ and (by Proposition \ref{diagsim}) $\Sim_B(\rho(\Gamma))\geq\Sim_{\B(\h_n)}(\rho_n(\Gamma))>n$ for any $n\in\N$. But this contradicts the unitarizability of $\Gamma$.
\end{proof}

\begin{defn}
We call a locally compact unitarizable group $\Gamma$ \textit{\textbf{unitarizable with constants}} $(K,\alpha)$, where $K$ and $\alpha$ are the minimal constants such that the previous proposition holds.
\end{defn}

Now, we provide bounds on the constants $(K,\alpha)$ for extensions of unitarizable groups by amenable groups using metric properties of the canonical associated action:

\begin{thm}
Let $1\to \Sigma \to \Gamma \to \Lambda \to 1$ be an extension of locally compact groups such that $\Lambda$ is amenable and $\Sigma$ is unitarizable with constants $(K,\alpha)$, then $\Gamma$ is unitarizable with constants $(K^3,3\alpha +2)$.
\end{thm}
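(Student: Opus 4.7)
The plan is to unitarize any strongly continuous uniformly bounded $\pi\colon\Gamma\to\B(\h)$ in two stages mirroring the extension. Write $M:=|\pi(\Gamma)|$. Using $|\pi(\Sigma)|\le M$ and the hypothesis on $\Sigma$, for each $\epsilon>0$ pick a positive unitarizer $s$ of $\pi(\Sigma)$ with $\|s\|\|s^{-1}\|\le KM^{\alpha}+\epsilon$. Replacing $\pi$ by $\rho=\Ad_{s^{-1}}\circ\pi$ makes $\rho|_{\Sigma}$ unitary while $N:=|\rho(\Gamma)|\le(KM^{\alpha}+\epsilon)M$.

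The central geometric observation is that the entire orbit $\oo_{\rho(\Gamma)}(\id)$ lies in the totally geodesic fixed point set $P^{\rho(\Sigma)}$. Indeed, for $\sigma\in\Sigma$ and $g\in\Gamma$, writing $\sigma g=g\tau$ with $\tau=g^{-1}\sigma g\in\Sigma$, the unitarity of $\rho(\tau)$ gives
\begin{align*}
\rho(\sigma)\,\rho(g)\rho(g)^{*}\,\rho(\sigma)^{*}=\rho(g)\rho(\tau)\rho(\tau)^{*}\rho(g)^{*}=\rho(g)\rho(g)^{*}.
\end{align*}
Combining this with the inclusion $\oo_{\rho(\Gamma)}(\id)\subseteq[N^{-2}\id,N^{2}\id]$ and the fact that both $P^{\rho(\Sigma)}$ and this operator interval are weakly closed and convex shows that the weak-operator closed convex hull
\begin{align*}
C:=\overline{\mathrm{conv}}^{w}\,\oo_{\rho(\Gamma)}(\id)
\end{align*}
is a weak-operator compact, convex, $\rho(\Gamma)$-invariant subset of $P^{\rho(\Sigma)}\cap[N^{-2}\id,N^{2}\id]$.

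Because $\rho(\Sigma)$ fixes $C$ pointwise by definition, the affine $\rho(\Gamma)$-action on $C$ factors through the amenable quotient $\Lambda$. Invoking the amenable-group fixed point property via the same invariant-mean averaging used in the proof of Proposition \ref{distamenable} produces a point $a\in C\cap P^{\rho(\Gamma)}$. Since $a\in[N^{-2}\id,N^{2}\id]$, Theorem \ref{distdiam} applied to $\rho(\Gamma)$ yields
\begin{align*}
\log\Sim(\rho(\Gamma))=\dist(\id,P^{\rho(\Gamma)})\le d(\id,a)\le 2\log N\le 2\log\bigl((KM^{\alpha}+\epsilon)M\bigr).
\end{align*}

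Finally, unitarizers compose: if $r\in\B(\h)$ unitarizes $\rho(\Gamma)$ then $sr$ unitarizes $\pi(\Gamma)$ with $\|sr\|\|(sr)^{-1}\|\le\|s\|\|s^{-1}\|\cdot\|r\|\|r^{-1}\|$. Taking infima,
\begin{align*}
\Sim(\pi(\Gamma))\le\|s\|\|s^{-1}\|\cdot\Sim(\rho(\Gamma))\le(KM^{\alpha}+\epsilon)^{3}M^{2},
\end{align*}
and letting $\epsilon\to 0$ gives $\Sim(\pi(\Gamma))\le K^{3}M^{3\alpha+2}$, i.e.\ unitarizability of $\Gamma$ with constants $(K^{3},3\alpha+2)$. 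The main obstacle I expect to need care with is verifying the continuity hypotheses for the $\Lambda$-action on $C$ needed to invoke the amenable fixed point property: conjugation $a\mapsto\rho(g)a\rho(g)^{*}$ is WOT continuous in $a$ for fixed $g$, but one must also use strong continuity of $\pi$ and weak-operator continuity of the adjoint on bounded sets to handle the dependence on $g\in\Lambda$ in the averaging step.
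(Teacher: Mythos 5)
Your proof is correct and yields exactly the stated constants $(K^3,3\alpha+2)$; it follows the same two-stage scheme as the paper (first use the constants of $\Sigma$, then average over the amenable quotient $\Lambda$), but the implementation differs in ways worth recording. The paper stays at the base point $\id$ and never conjugates: it invokes Lemma \ref{minunit} to produce an \emph{exact} nearest point $a\in P^{\pi(\Sigma)}$ to $\id$ (whose square root is an optimal unitarizer of $\pi(\Sigma)$, cf.\ Remark \ref{simdist}), applies the amenable averaging to the orbit of $a$ to get $b\in P^{\pi(\Gamma)}$ with $d(a,b)\le D_{\pi(\Gamma)}(a)$, and then uses the $2$-Lipschitz property of $D_H$ (Lemma \ref{lemdh}) and the triangle inequality to reach $\dist(\id,P^{\pi(\Gamma)})\le 3\log K+\tfrac{3\alpha+2}{2}D_{\pi(\Gamma)}(\id)$. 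You instead conjugate by an $\epsilon$-optimal positive unitarizer $s$ of $\pi(\Sigma)$ and do the bookkeeping multiplicatively: your bound $N\le(KM^{\alpha}+\epsilon)M$ is precisely the exponentiated form of the paper's Lipschitz step, and your observation that the whole orbit $\oo_{\rho(\Gamma)}(\id)$ lies in $P^{\rho(\Sigma)}$ (normality of $\Sigma$ plus unitarity of $\rho(\Sigma)$) plays the role of the paper's remark that $P^{\pi(\Sigma)}$ is $\pi(\Gamma)$-invariant with the action factoring through $\Lambda$. What your route buys: no appeal to Lemma \ref{minunit}, hence none of the von Neumann algebra/separability hypotheses that lemma carries — the $\epsilon\to 0$ limit makes exact minimizers unnecessary — and a very explicit compact convex set $C\subseteq P^{\rho(\Sigma)}\cap[N^{-2}\id,N^{2}\id]$ on which $\Lambda$ acts affinely. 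What the paper's route buys: an argument phrased entirely in the intrinsic quantities $\dist(\id,P^H)$ and $D_H(\id)$, with the conjugation absorbed into translation invariance. The continuity point you flag in the averaging step is genuine but is exactly the issue the paper also leaves to "an argument such as in Lemma \ref{distamenable}" (Fack's result, or an invariant mean applied to the bounded, coset-independent, continuous matrix coefficients of the orbit map), so your proposal is at the same level of rigor as the paper there and contains no gap relative to it.
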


\begin{proof}
Let $\pi:\Gamma\to G$ be a strongly continuous uniformly bounded representation. Since $\Sigma$ is a normal subgroup of $\Gamma$, the set $P^{\pi(\Sigma)}$ is $\pi(\Gamma)$-invariant and the action 
\begin{align*}\Gamma\xrightarrow{\pi} G\to \Isom(P^{\pi(\Sigma)})\end{align*}
factors through an action $\Lambda\simeq\Gamma/\Sigma\xrightarrow{\overline{\pi}} G\to \Isom(P^{\pi(\Sigma)})$. By Lemma \ref{minunit} there is an $a\in P^{\pi(\Sigma)}$ minimizing the distance to $\id$, i.e. an $a$ such that $d(\id,a)=\dist(\id,P^{\pi(\Sigma)})$. Then, by an argument such as in Lemma \ref{distamenable} we get a fixed point $b\in P^{\pi(\Gamma)}$ and a bound
\begin{align*}d(a,b)\leq \diam(\oo_{\overline{\pi}(\Lambda)}(a))=\diam(\oo_{\pi(\Gamma)}(a))=D_{\pi(\Gamma)}(a).\end{align*}
Since the diameter of the orbit of a point is 2-Lipschitz (see Lemma \ref{lemdh}), $\Sigma$ has bounds $(K,\alpha)$ and $\pi(\Sigma)\subseteq \pi(\Gamma)$, we obtain
\begin{align*}
\diam(\oo_{\pi(\Gamma)}(a))&\leq2\dist(\id,a)+\diam(\oo_{\pi(\Gamma)}(\id)) \\
&=2\dist(\id,P^{\pi(\Sigma)})+\diam(\oo_{\pi(\Gamma)}(\id)) \\
&\leq2(\log(K)+\frac{\alpha}{2}\diam(\oo_{\pi(\Sigma)}(\id)))+\diam(\oo_{\pi(\Gamma)}(\id)) \\
&\leq2(\log(K)+\frac{\alpha}{2}\diam(\oo_{\pi(\Gamma)}(\id)))+\diam(\oo_{\pi(\Gamma)}(\id)). 
\end{align*}
Therefore
\begin{align*}
\dist(\id,P^{\pi(\Gamma)})&\leq d(\id,b)\leq d(\id,a)+d(a,b)\\
&\leq\log(K)+\frac{\alpha}{2}\diam(\oo_{\pi(\Gamma)}(\id))\\
&+2(\log(K)+\frac{\alpha}{2}\diam(\oo_{\pi(\Gamma)}(\id)))+\diam(\oo_{\pi(\Gamma)}(\id))\\
&\leq3\log(K)+\frac{3\alpha + 2}{2}\diam(\oo_{\pi(\Gamma)}(\id)),
\end{align*}
proving the constants for the group $\Gamma$.
\end{proof}
\subsection{Algebras with trace}\label{algtrace}
In this subsection, we consider the case of particular subalgebras $A\subseteq \B(\h)$ of $\B(\h)$, for which the family of geodesics as defined in (\ref{geodesic}) is having a stronger convexity property: there will be metrics having the same geodesics but defining a CAT(0)-structure on the corresponding subcone $P\cap A$ of positive invertible elements in $A$.\par 
One instance, where this happens, is the algebra of Hilbert-Schmidt pertubations of scalar multiples of the identity operator $A=\B_2(\h)+\C\id \subseteq \B(\h)$. 
Recall that $\B_2(\h)$ is a Banach algebra consisting of all operators such that the \textit{Hilbert-Schmidt norm} defined by $\|a\|_{HS}=\tr(a^*a)^{\frac12}$ is finite. Then, $\|a\|\leq \|a\|_{HS}$ for $a\in \B_2(\h)$.\par 
There is a natural Hilbert space structure on $A$, such that the scalar operators are orthogonal to Hilbert-Schmidt operators: $\langle a+ \alpha\id,b+\beta\id\rangle_2=\tr(ab^*)+\alpha\Bar{\beta}.$ The geometry of this space was studied in \cite{larotonda}.
Note, that $A=\B_2(\h)+\C \id$ is complete with the norm corresponding to the inner product, since the Hilbert-Schmidt inner product induces a complete norm on the ideal of Hilbert-Schmidt operators. The real space of $\B_2(\h)+\C \id$, which is $\B_2(\h)_s+\R \id$, inherits the structure of a real Banach space, and with the same inner product it becomes a real Hilbert space.\par 
Inside $\B_2(\h)_s+\R \id$, we consider the open subset $P$ of positive invertible operators. For $a\in P$, one can identify the tangential space $T_aP$ of $P$ at $a$ with $\B_2(\h)_s+\R \id$ and endow this manifold with a real Riemannian metric by means of the following formula: $\langle X,Y\rangle_a=\langle a^{-1}X,a^{-1}Y \rangle_2$.\par 
The unique geodesic $\gamma_{a,b}:[0,1]\to P$ joining $a$ and $b$ is given as in (\ref{geodesic}) by $\gamma_{a,b}(t)=a^{\frac12}(a^{-\frac12}ba^{-\frac12})^ta^{\frac12}$ and realizes the distance, i.e. $d(a,b)=\mathrm{Length}(\gamma_{a,b})=\|\log(a^{-\frac12}ba^{-\frac12})\|_2$ (see Theorem 3.8 and Remark 3.9 in \cite{larotonda}).\par 
With this metric, $P$ is a complete metric space and if $g$ is an invertible operator in $\B_2(\h)+\C \id$ then $I_g:P\to P$ is an isometry as shown in Lemma 2.5 in \cite{larotonda}.\par 
Lemma 3.11 in \cite{larotonda},
$P$ is a complete $CAT(0)$ space. 

We next prove that if a group $H$ is close in some sense to the unitary group it is unitarizable using the Bruhat-Tits fixed point theorem (see \cite{bruhattits}). 

\begin{thm}\label{hsunit}
If $H$ is a group of invertible elements in $\B_2(\h)+\C \id$ such that we have $\sup_{h\in H}\|hh^*-\id\|_2=C < \infty$ then $H$ is unitarizable.
\end{thm}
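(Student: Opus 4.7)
The plan is to apply the Bruhat-Tits fixed point theorem to the isometric action of $H$ on the complete CAT(0) space $P$ (equipped with the Hilbert-Schmidt geodesic metric $d$). If the orbit $\oo_H(\id) = \{hh^* : h \in H\}$ can be shown to be bounded in $d$, Bruhat-Tits will furnish a fixed point $a \in P^H$, and Proposition \ref{fixed} will then say that $s = a^{1/2}$ is a positive unitarizer of $H$.

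The heart of the argument is therefore to bound $d(\id, hh^*) = \|\log(hh^*)\|_2$ uniformly in $h \in H$ in terms of $C$. The crucial first move is to exploit that the group is closed under inverses: applying the hypothesis to $h^{-1} \in H$ gives $\|h^{-1}(h^{-1})^* - \id\|_2 = \|(h^*h)^{-1} - \id\|_2 \le C$. Using $\|\cdot\| \le \|\cdot\|_2$ together with the coincidence $\sigma(hh^*) = \sigma(h^*h)$ on nonzero spectrum, I can combine these two inequalities to obtain the two-sided spectral bound $\sigma(hh^*) \subseteq [1/(1+C), 1+C]$, uniformly in $h \in H$.

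Next I will decompose $hh^* - \id = K_h + \beta_h \id$ according to the orthogonal splitting $A = \B_2(\h) \oplus \C\id$, so that $\|K_h\|_{HS}, |\beta_h| \le C$ and $1 + \beta_h \in [1/(1+C), 1+C]$ (the latter being the essential-spectrum value of $hh^*$). Then $\log(hh^*) = \log(1+\beta_h)\id + \log(\id + K_h/(1+\beta_h))$ also splits orthogonally; the scalar part contributes at most $\log(1+C)^2$ to $\|\log(hh^*)\|_2^2$. For the Hilbert-Schmidt summand, the spectrum of $\id + K_h/(1+\beta_h) = hh^*/(1+\beta_h)$ sits in the compact interval $[1/(1+C)^2, (1+C)^2]$, on which there is a constant $K_C$ satisfying $(\log \nu)^2 \le K_C (\nu - 1)^2$; summing over eigenvalues gives $\|\log(\id + K_h/(1+\beta_h))\|_{HS} \le \sqrt{K_C}\,C(1+C)$. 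Hence $d(\id, hh^*)$ is bounded by a constant depending only on $C$, and Bruhat-Tits completes the proof.

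The main obstacle I anticipate is precisely this comparison between $\|\log(hh^*)\|_2$ and $\|hh^* - \id\|_2$. Because the logarithm is not globally Lipschitz on $P$, one cannot sidestep the two-sided spectral bound, which in turn crucially depends on applying the hypothesis to both $h$ and $h^{-1}$ so as to pin the spectrum of $hh^*$ uniformly away from both $0$ and $\infty$.
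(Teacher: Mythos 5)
Your proposal is correct and follows essentially the same route as the paper's proof: a uniform two-sided spectral bound on $hh^*$ (obtained by applying the hypothesis to $h^{-1}\in H$), a comparison of $\log$ with $x\mapsto x-1$ on the resulting compact spectral interval to bound $d(\id,hh^*)=\|\log(hh^*)\|_2$ uniformly, and then the Bruhat--Tits fixed point theorem combined with Proposition \ref{fixed}. The only divergence is your explicit treatment of a possible scalar component of $hh^*-\id$ (the paper simply takes $hh^*-\id$ to be Hilbert--Schmidt, hence compact); just note that on $\B_2(\h)+\C\id$ the inequality $\|x\|\leq\|x\|_2$ you invoke can fail when the scalar part is nonzero, though only up to a factor $\sqrt{2}$, so your spectral bound survives with a slightly worse constant and the argument is unaffected.
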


\begin{proof}
First of all, $\sup_{h\in H}\|hh^*-\id\|_2=C < \infty$ implies
\begin{align*}\|hh^*\|-\|\id\|\leq \|hh^*-\id\|\leq \|hh^*-\id\|_2\leq C~\forall h\in H,\end{align*} 
so that $(C+1)^{-1}\id \leq hh^* \leq (C+1)\id$.\par 

 We look at the induced action on $P$ and want to prove the finiteness of $\diam(\oo_H(\id))=\sup_{h\in H}\|\log(hh^*)\|_2$ to derive the existence of a fixed point.\par 
 For $h\in H$ and as $hh^*-\id$ is compact, $hh^*$ is diagonalizable and has eigenvalues $(s_j)_j\subseteq [(C+1)^{-1},(C+1)]$, hence $\|hh^*-\id\|_2^2=\sum_j(s_j-1)^2\leq C^2.$ Therefore, $\log(hh^*)$ is diagonalizable and has eigenvalues $(\log(s_j))_j$.\par 
 Now, let $D$ be a real number such that $|\log(x)|\leq D|x-1|$ for all $x\in [(C+1)^{-1},(C+1)]$.
 Then 
 \begin{align*}\|\log(hh^*)\|_2^2=\sum_j\log(s_j)^2\leq \sum D^2(s_j-1)^2 \leq D^2C^2.\end{align*}
Since the last inequality holds for all $h\in H$ we see that $\diam(\oo_H(\id))\leq D^2C^2$. Since $\oo_H(\id)$ is bounded, by Theorem \cite{lang}*{Chapter XI, Lemma 3.1 and Theorem 3.2} the circumcenter $a\in P$ of this set is a fixed point for the action of $H$. \par 
Finally, by Proposition \ref{fixed} $s=a^{\frac12}$ is a unitarizer of $H$. 
\end{proof}

\begin{cor}
If $H$ is a group of invertible elements with $\sup_{h\in H}\|h-\id\|_2=M < \infty$, then $H$ is unitarizable.
\end{cor}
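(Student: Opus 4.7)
The plan is to reduce this corollary to Theorem~\ref{hsunit}, whose hypothesis controls $\|hh^*-\id\|_2$ rather than $\|h-\id\|_2$. So the whole task is to show that the assumption $\sup_{h\in H}\|h-\id\|_2=M<\infty$ implies $\sup_{h\in H}\|hh^*-\id\|_2<\infty$, after which Theorem~\ref{hsunit} immediately yields a unitarizer.

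The first step is the algebraic identity
\[
hh^*-\id=(h-\id)(h^*-\id)+(h-\id)+(h^*-\id),
\]
obtained by expanding $h=\id+(h-\id)$. Since the involution on $\B_2(\h)+\C\id$ sends $x+\alpha\id$ to $x^*+\bar\alpha\id$ without altering the norm, one has $\|h^*-\id\|_2=\|h-\id\|_2\leq M$, so the two linear terms contribute at most $2M$ to $\|hh^*-\id\|_2$.

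The substantive step is to estimate the product term $\|(h-\id)(h^*-\id)\|_2$ in terms of $\|h-\id\|_2$. For this I would verify that $\|\cdot\|_2$ is submultiplicative up to a universal constant on $\B_2(\h)+\C\id$: writing $a=x+\alpha\id$ and $b=y+\beta\id$ with $x,y\in\B_2(\h)$ and expanding the product, the Hilbert-Schmidt part is controlled by $\|xy\|_{HS}\leq\|x\|\,\|y\|_{HS}\leq\|x\|_{HS}\|y\|_{HS}$, while the cross and scalar terms are handled by a direct triangle-and-Cauchy-Schwarz estimate, yielding $\|ab\|_2\leq C_0\|a\|_2\|b\|_2$ for some absolute $C_0$. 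Applied with $a=h-\id$ and $b=h^*-\id$, this bounds the product term by $C_0 M^2$.

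Combining the estimates gives $\|hh^*-\id\|_2\leq C_0 M^2+2M$ uniformly in $h\in H$, so Theorem~\ref{hsunit} applies and $H$ is unitarizable. The only non-formal point is the submultiplicativity-up-to-a-constant of $\|\cdot\|_2$ on $\B_2(\h)+\C\id$, which is a short but slightly fiddly calculation; once that is in hand, the rest is just the identity above and the triangle inequality.
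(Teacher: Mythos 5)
Your argument is correct and follows the same overall strategy as the paper --- bound $\sup_{h\in H}\|hh^*-\id\|_2$ uniformly and invoke Theorem~\ref{hsunit} --- but the two proofs diverge at the norm estimate. The paper expands $hh^*-\id=h(h^*-\id)+(h-\id)$ and uses the ideal property $\|ab\|_{HS}\le\|a\|\,\|b\|_{HS}$ of the Hilbert--Schmidt norm against the operator norm, together with the elementary bound $\|h\|\le\|h-\id\|+1\le M+1$, giving $\|hh^*-\id\|_2\le (M+1)M+M$ with no need for any multiplicativity of $\|\cdot\|_2$ itself. You instead expand symmetrically, $hh^*-\id=(h-\id)(h^*-\id)+(h-\id)+(h^*-\id)$, and pay for the quadratic term with a quasi-submultiplicativity estimate $\|ab\|_2\le C_0\|a\|_2\|b\|_2$ on $\B_2(\h)+\C\id$; this is true (writing $a=x+\alpha\id$, $b=y+\beta\id$ one gets $\|ab\|_2\le(\|x\|_{HS}+|\alpha|)(\|y\|_{HS}+|\beta|)\le 2\|a\|_2\|b\|_2$, so $C_0=2$ works), and your sketch of how to verify it is the right one, but it is an extra lemma that the paper's choice of decomposition avoids at the cost of nothing more than the trivial operator-norm bound on $h$. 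Both routes give a uniform bound (yours $C_0M^2+2M$, the paper's $M^2+2M$), so either way Theorem~\ref{hsunit} finishes the argument; you might just add the one-line remark, as the paper does, that the hypothesis forces $H\subseteq\B_2(\h)+\C\id$, so that Theorem~\ref{hsunit} indeed applies.
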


\begin{proof}
That $H\subseteq \B_2(\h)+\C \id$ is apparent.
Since $\|h\|-\|\id\|\leq \|h-\id\|\leq \|h-\id\|_2\leq M$ for all $h\in H$ we see that $\|h\|\leq M+1$ for all $h\in H$. Since
\begin{align*}hh^*-\id=hh^*-h+h-\id=h(h^*-\id)+h-\id \end{align*}
for all $h\in H$ it follows that
\begin{align*}\|hh^*-\id\|_2\leq \|h\|\|h^*-\id\|_2+\|h-\id\|_2\leq (M+1)M + M\end{align*}
for all $h\in H$. By Proposition \ref{hsunit}, $H$ is unitarizable.
\end{proof}

\begin{rem}
An extension of the previous propositions to p-Schatten perturbations of scalar operators ($1<p<\infty$) is possible, as in that case we are dealing with uniformly convex non-positively curved Busemann spaces and the existence of circumcenters of bounded sets implies that the Bruhat-Tits fixed point theorem still holds. We should also remark that the aforementioned fixed point theorem holds for actions of semigroups.
\end{rem}

We want to further analyze the orbit structure of the action $I_H$. Using Proposition \ref{tranorb} we can assume that $H$ is a group of unitaries. Let $\M=(\B_2(\h)_s+\R \id)\cap H'$ denote the set of elements in $\B_2(\h)_s+\R\id$ commuting with the $H$-action. Then
\begin{align*}\B_2(\h)_s+\R \id=\M \oplus \M^{\perp}\end{align*}
with respect to the inner product $\langle x,y\rangle_2$. Note that since $\R\id\subseteq H'$ $\M^{\perp}\subseteq \B_2(\h)$ follows.\par 
The next result is Proposition 5.12 in \cite{larotonda}. It shows that the orthogonal splitting of $\B_2(\h)_s+\R \id$ also makes sense in the metric language.
\begin{prop}\label{minprop} 
If $\M\subseteq \B_2(\h)_s+\R \id$ is a Lie triple system, then the map
\begin{align*}\M^{\perp}\times \M \xrightarrow{\sim} P,\qquad (X,Y)\mapsto e^Ye^Xe^Y\end{align*}
is a diffeormorphism. If $a=e^Ye^Xe^Y$ is the decomposition of an $a\in P$, then the closest point in $\exp(\M)$ to $a$ is $e^{2Y}$, and this point is unique with this property. 
\end{prop}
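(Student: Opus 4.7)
The plan is to combine the CAT(0) nearest-point projection theorem with a first-variation calculation. Since $\M$ is a Lie triple system of self-adjoint operators, the argument of Proposition~\ref{totalgeod} shows that $\exp(\M)\subseteq P$ is a closed totally geodesic submanifold, hence a closed geodesically convex subset of the complete CAT(0) space $P$. Thus for each $a \in P$ there is a unique $m_a \in \exp(\M)$ realizing $\dist(a,\exp(\M))$. Write $m_a = e^{2Y}$ with $Y\in\M$ and set $X = \log(e^{-Y}ae^{-Y})$, so that $a = e^Y e^X e^Y$. The whole argument is then to characterize this decomposition by the orthogonality $X\in \M^{\perp}$.

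For the first-variation, let $Y(s)$ be a smooth curve in $\M$ with $Y(0)=Y$, $Y'(0)=Z\in\M$, and set $c(s)=e^{2Y(s)}$. The Riemannian cosine rule on $P$ gives
\begin{align*}
\tfrac{d}{ds}\Big|_{s=0} d\bigl(a,\,c(s)\bigr)^2 \;=\; -2\,\bigl\langle \log_{e^{2Y}}(a),\,\dot c(0)\bigr\rangle_{e^{2Y}}.
\end{align*}
The geodesic formula \eqref{geodesic} yields $\log_{e^{2Y}}(a) = e^Y X e^Y$, and the standard formula for the differential of the exponential map gives $\dot c(0)=2\int_0^1 e^{(1-u)\cdot 2Y}Z\,e^{u\cdot 2Y}\,du$. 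Unfolding $\langle\cdot,\cdot\rangle_{e^{2Y}}$ via the definition of the metric, applying cyclicity of the trace, and substituting $v=2u-1$ reduces the right-hand side, up to a positive constant, to
\begin{align*}
\tr\!\Bigl(X\cdot \tfrac{2\sinh(\operatorname{ad}_Y)}{\operatorname{ad}_Y}(Z)\Bigr).
\end{align*}

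Because $\M$ is a Lie triple system, $(\operatorname{ad}_Y)^{2m}$ sends $\M$ into $\M$ for every $m \ge 0$, so $\frac{2\sinh(\operatorname{ad}_Y)}{\operatorname{ad}_Y}=\sum_{m\ge 0}\frac{2\,(\operatorname{ad}_Y)^{2m}}{(2m+1)!}$ restricts to a positive invertible operator on $\M$. Hence the first variation vanishes for every $Z\in\M$ iff $\tr(XW)=0$ for every $W\in\M$, i.e.\ iff $X\in\M^{\perp}$. Since $m\mapsto d(a,m)^2$ is convex along the geodesics of the CAT(0) subspace $\exp(\M)$, the vanishing first-variation condition at $e^{2Y}$ is equivalent to $e^{2Y}$ being the unique nearest point to $a$. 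This simultaneously proves the closest-point assertion and, combined with existence and uniqueness of the nearest-point projection, shows that $(X,Y)\mapsto e^Y e^X e^Y$ is a bijection $\M^{\perp}\times \M \to P$.

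To conclude the diffeomorphism claim, the map $\Phi(X,Y)=e^Y e^X e^Y$ is smooth, and its differential at $(0,0)$ is $(X,Y)\mapsto X+2Y$, an isomorphism $\M^{\perp}\oplus \M \simeq T_{\id} P$. Invertibility of $d\Phi$ at a general point follows either by the analogous computation at $(X,Y)$ using the explicit $\operatorname{ad}$-series, or from smoothness of the nearest-point projection in the Riemannian setting; so $\Phi$ is a local diffeomorphism, and being bijective it is a global one. The main obstacle is the first-variation computation: keeping track of the even-versus-odd parity of iterated brackets — even powers preserve $\M$ by the Lie triple system property, while odd powers land in the anti-self-adjoint subspace $[\M,\M]$ and cancel from the symmetric integral by oddness — is the delicate step.
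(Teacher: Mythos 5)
You should first note that the paper contains no proof of this proposition at all: it is quoted as Proposition 5.12 of \cite{larotonda}, so your argument can only be compared with the literature. For the metric assertions your route is essentially the standard one and is correct in substance: $\exp(\M)$ is a closed geodesically convex subset of the complete CAT(0) space $P$ (closedness deserves a word, e.g.\ via the exponential metric increasing inequality $\|X-Y\|_2\leq d(e^X,e^Y)$), the Bruhat--Tits projection gives a unique nearest point, and your first-variation computation correctly characterizes criticality by $X\in\M^{\perp}$: odd powers of $\operatorname{ad}_Y$ send $\M$ into skew-adjoint operators, which pair to zero against the self-adjoint $X$ in the real trace inner product, and the surviving even part is a function of $\operatorname{ad}_Y$ that preserves $\M$ and is bounded below by a positive constant, hence an automorphism of $\M$. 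One caveat: with the inner product literally as written in this paper, $\langle U,V\rangle_a=\langle a^{-1}U,a^{-1}V\rangle_2$, the unfolding gives $\int_1^3 e^{w\operatorname{ad}_Y}(Z)\,dw$ rather than your symmetric $\int_{-1}^1$, so after discarding the odd part the operator is $\bigl(\sinh(3\operatorname{ad}_Y)-\sinh(\operatorname{ad}_Y)\bigr)\operatorname{ad}_Y^{-1}$; it is again even and $\geq 2$, so nothing changes, while your $2\sinh(\operatorname{ad}_Y)\operatorname{ad}_Y^{-1}$ is the formula for the invariant metric $\Re\tr(a^{-1}Ua^{-1}V)$ actually used in \cite{larotonda}. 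Your convexity argument then correctly upgrades criticality to global minimality, which gives both the closest-point statement and bijectivity of $(X,Y)\mapsto e^Ye^Xe^Y$.

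The genuine gap is the diffeomorphism claim. You verify invertibility of $d\Phi$ only at $(0,0)$ and then assert that invertibility at a general $(X,Y)$ ``follows by the analogous computation'' or ``from smoothness of the nearest-point projection.'' Neither can be left as an assertion: at a general point the differential mixes the $\M$- and $\M^{\perp}$-directions through $\operatorname{ad}$-series and is no longer the simple map $ (X,Y)\mapsto X+2Y$, and proving its invertibility is exactly the analytic content of the Porta--Recht splitting \cite{portarecht} and of Proposition 5.12 in \cite{larotonda}; on the other hand, smoothness of the metric projection onto $\exp(\M)$ in this infinite-dimensional setting is not an off-the-shelf fact --- it is usually deduced from the very tubular-neighbourhood/splitting statement being proved, so invoking it here is circular. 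Since a smooth bijection need not be a diffeomorphism without everywhere-invertible differential, the first sentence of the proposition is not established as written; either carry out the differential computation (or an implicit function theorem argument) at arbitrary $(X,Y)$, or cite the Porta--Recht/Larotonda splitting for the diffeomorphism and keep your CAT(0) argument for the minimality statement, which is the part this paper actually uses.
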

The following gives a metric description of the orbit structure on $P$:
\begin{prop}\label{inv}
The sets $e^Ye^{\M^{\perp}}e^Y$ are invariant for the action $I_H$. The circumcenter of any orbit in $e^Ye^{\M^{\perp}}e^Y$ is $e^{2Y}$.
\end{prop}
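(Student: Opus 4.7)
My plan for the invariance is a direct computation. Given $a=e^Ye^Xe^Y$ with $X\in\M^\perp$ and $Y\in\M$, since every $h\in H$ commutes with $Y$ we get $he^Y=e^Yh$, and hence
\begin{align*}
I_h(a)=he^Ye^Xe^Yh^*=e^Y(he^Xh^*)e^Y=e^Ye^{hXh^*}e^Y.
\end{align*}
So invariance reduces to showing $hXh^*\in\M^\perp$. Clearly $hXh^*\in\B_2(\h)_s$ (in fact $hXh^*\in\B_2(\h)$ since $\M^\perp\subseteq\B_2(\h)$). For any $Z=Z_0+\beta\id\in\M$ the element $Z_0$ also commutes with $H$ (it is the Hilbert--Schmidt part of $Z$), and because $Z_0$ is self-adjoint, $Z_0h=hZ_0$ entails $h^*Z_0=Z_0h^*$. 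By cyclicity of the trace,
\begin{align*}
\langle hXh^*,Z\rangle_2=\tr(hXh^*Z_0)=\tr(Xh^*Z_0h)=\tr(XZ_0)=\langle X,Z\rangle_2=0.
\end{align*}
This verifies $hXh^*\in\M^\perp$ and hence invariance of $e^Ye^{\M^\perp}e^Y$ under $I_H$.

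For the circumcenter assertion, my strategy is to exploit the CAT(0) geometry together with Proposition \ref{minprop}. Let $a\in e^Ye^{\M^\perp}e^Y$, set $r=d(e^{2Y},a)$, and observe that since $Y\in\M$ we have $I_h(e^{2Y})=e^{2Y}$ for every $h\in H$. Using that $I_h$ is an isometry,
\begin{align*}
d(e^{2Y},I_h(a))=d(I_{h^{-1}}(e^{2Y}),a)=d(e^{2Y},a)=r,
\end{align*}
so the entire orbit $\oo_H(a)$ lies on the sphere of radius $r$ around $e^{2Y}$. This already gives circumradius $\leq r$.

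For the matching lower bound and uniqueness, I would use that $P$ is CAT(0) (by the cited Lemma 3.11 of \cite{larotonda}), so the circumcenter of the bounded set $\oo_H(a)$ exists and is unique. Since $\oo_H(a)$ is $H$-invariant and $I_H$ acts by isometries, the unique circumcenter must itself be $H$-fixed, hence by Remark \ref{fixunit} it belongs to $P^H=\exp(\M)$. But for any $c\in\exp(\M)$,
\begin{align*}
\sup_{b\in\oo_H(a)}d(c,b)\geq d(c,a)\geq d(e^{2Y},a)=r,
\end{align*}
where the second inequality comes from Proposition \ref{minprop} and is strict whenever $c\neq e^{2Y}$. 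Combined with the upper bound this forces the circumcenter to be exactly $e^{2Y}$.

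I expect the main delicate point to be the verification that $hXh^*\in\M^\perp$; everything else is essentially a packaging of Proposition \ref{minprop}, the CAT(0) circumcenter existence/uniqueness, and the fact that $H$-invariant bounded sets in a CAT(0) space under an isometric action have $H$-fixed circumcenters. One should also note in passing that boundedness of $\oo_H(a)$ (needed for the circumcenter) is not automatic in general, but here it follows from the explicit description $I_h(a)=e^Ye^{hXh^*}e^Y$ together with the estimate $\|hXh^*\|_2=\|X\|_2$.
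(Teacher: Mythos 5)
Your argument is correct and takes essentially the same route as the paper's proof: invariance comes from the $\Ad_H$-invariance of $\M^{\perp}$ via cyclicity of the trace and the commutation of $H$ with elements of $\M$, and the circumcenter is identified by combining the CAT(0)/Bruhat--Tits fact that the (unique) circumcenter of the invariant orbit is $H$-fixed, hence lies in $P^H=\exp(\M)$, with the uniqueness of the nearest point $e^{2Y}$ from Proposition \ref{minprop}. You merely make explicit the equidistance and comparison step that the paper compresses into its citation of Lang, and, like the paper, your computations (e.g. $he^Xh^*=e^{hXh^*}$ and $\tr(Xh^*Z_0h)=\tr(XZ_0)$) use the standing reduction, stated just before the proposition, that $H$ is a group of unitaries.
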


\begin{proof}
We have $\M^{\perp}=\{X\in \B_2(\h)_s+\R \id: \langle X,Y\rangle_2=0 \mbox{ for all } Y\in \M\}$. Note that $\M^{\perp}$ is $\Ad_H$-invariant because if $Y+\alpha\id\in \M\subseteq \B_2(\h)_s+\R\id$, $X\in \M^{\perp}\subseteq \B_2(\h)_s$ and $h\in H$, then
\begin{align*}
\langle hXh^{-1},Y+\alpha\id \rangle_2 &=\langle hXh^{-1}+0\id,Y+\alpha\id \rangle_2=\tr((hXh^{-1})Y^*)+0\bar{\alpha}\\
&=\tr(hXY^*h^{-1})= \tr(h^{-1}hXY^*)=\tr(XY^*)= \langle X,Y+\alpha\id\rangle_2=0,
\end{align*}
where we used that $Y\in H'$ and the cyclicity property of the Hilbert-Schmidt trace.\par 
Because $\M=(\B_2(\h)_s+\R\id)\cap H'$ is the self-adjoint part of the algebra $(\B_2(\h)+\C\id)\cap H'\subseteq A$, it is a Lie triple system, see Definition \ref{lietrip} and the proof of Proposition \ref{totalgeod}. Therefore we can apply the decomposition in Proposition \ref{minprop} and write $a\in P$ as $a=e^Ye^Xe^Y$. Then, 
\begin{align*}I_h(a)=hah^{-1}=he^Ye^Xe^Yh^{-1}=e^{\Ad_h(Y)}e^{\Ad_h(X)}e^{\Ad_h(Y)}=e^Ye^{\Ad_h(X)}e^Y\end{align*}
so that the sets $e^Ye^{\M^\perp}e^Y$ are invariant for the action $I_H$. An orbit in $e^Ye^{\M^\perp}e^Y$ is of the form $\{e^Ye^{\Ad_h(X)}e^Y:h\in H\}$ for some $X\in \M^{\perp}$, and by \cite{lang}*{Chapter XI, Lemma 3.1 and Theorem 3.2} its circumcenter is a fixed point of the action which is closest to each element in the orbit. This point is $e^{2Y}$ by Proposition \ref{minprop}.
\end{proof}

Another case of a CAT(0)-space structure on positive operators are finite von Neumann algebras $A$, where the metric on $P$ is defined through the trace $\tau:A\to \C$. This case was fully treated in \cite{miglio2}

\begin{thm}
If $H$ is a group of invertible operators in a finite von Neumann algebra $A$ such that $\sup_{h\in H}\|h\| = |H| < \infty$ then there is an $s \in \{a\in A:|H|^{-1}\id\leq a \leq |H|\id\}$ such that $s^{-1}Hs$ is a group of unitary operators in $A$.
\end{thm}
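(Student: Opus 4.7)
The plan is to mimic the approach used in Theorem~\ref{hsunit}: produce a fixed point of $I_H$ in $P$ as the circumcenter of the orbit of $\id$ using the Bruhat--Tits fixed point theorem, and then check that this fixed point lies in the appropriate operator interval so that its square root provides the desired unitarizer.

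First I would record the two elementary bounds that locate the orbit. Since $H$ is a group, both $h$ and $h^{-1}$ belong to $H$, so $\|h\|,\|h^{-1}\|\leq |H|$, and therefore
\begin{align*}
\oo_H(\id)=\{hh^*:h\in H\}\subseteq [\,|H|^{-2}\id,\,|H|^{2}\id\,].
\end{align*}
In the finite von Neumann algebra $A$, the CAT(0) metric on $P$ coming from the trace satisfies a bound of the form $d(\id,a)\leq C\|\log a\|$ with $C=\tau(\id)^{1/2}$ (or any analogous trace bound), so the operator interval above is bounded in the CAT(0) metric and in particular $\oo_H(\id)$ has finite diameter. The action $I_H$ is isometric on this CAT(0) space (the analogue of Proposition~\ref{invmetricap} for the trace metric, as recorded in \cite{miglio2}).

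Now I would apply the Bruhat--Tits fixed point theorem (\cite{lang}*{Chapter XI, Lemma 3.1 and Theorem 3.2}), exactly as in the proof of Theorem~\ref{hsunit}: the bounded orbit $\oo_H(\id)$ has a unique circumcenter $a\in P$, and since the action is isometric this circumcenter is a fixed point, $a\in P^H$. By Proposition~\ref{fixed}, $s:=a^{1/2}$ then unitarizes $H$.

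The remaining point, and the only one requiring care, is to show that $s$ lies in the prescribed norm range, equivalently that $a\in [\,|H|^{-2}\id,\,|H|^{2}\id\,]$. For this I would argue that the operator interval $[\,|H|^{-2}\id,\,|H|^{2}\id\,]$ is geodesically convex with respect to the geodesics (\ref{geodesic}): indeed, it coincides with the closed ball $B[\id,\,2\log|H|]$ for the Finslerian distance of Section~\ref{prel}, and balls in that metric are geodesically convex along (\ref{geodesic}); this convexity is a property of the geodesic structure and therefore also holds in the trace CAT(0) metric, which shares the same geodesics. The orbit $\oo_H(\id)$ sits inside this closed, geodesically convex set, so by the standard CAT(0) fact that projection to a closed convex subset is distance non-increasing, the projection of the circumcenter onto the interval would have circumradius no larger than that of the circumcenter itself; uniqueness of the circumcenter then forces $a$ to lie in the interval. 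Hence $|H|^{-1}\id\leq s\leq |H|\id$, which is exactly the required conclusion.

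The main obstacle is precisely this last geodesic-convexity argument for the operator interval in the CAT(0) metric; everything else is a direct transcription of the Hilbert--Schmidt argument in Theorem~\ref{hsunit} into the finite von Neumann algebra setting provided by \cite{miglio2}.
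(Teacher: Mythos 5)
Your outline follows the strategy of the source the paper cites for this theorem (the paper itself gives no proof and defers to \cite{miglio2}), but there is a genuine gap at the decisive step: the Bruhat--Tits fixed point theorem requires a \emph{complete} metric space, and for an infinite-dimensional finite von Neumann algebra the space $(P,d_2)$ with $d_2(a,b)=\|\log(a^{-1/2}ba^{-1/2})\|_2$ is \emph{not} complete. Already for $A=L^\infty[0,1]$ with the trace given by integration, $\log$ identifies $(P,d_2)$ isometrically with the self-adjoint part of $L^\infty$ carrying the $L^2$-norm, which is dense but not closed in $L^2$: Cauchy sequences leave $P$, their limits being unbounded positive operators affiliated with $A$. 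This is precisely where the finite von Neumann case differs from Theorem \ref{hsunit}: there the set of logarithms is all of $\B_2(\h)_s+\R\id$, which is complete for the Hilbert--Schmidt inner product, so Larotonda's completeness result applies and the circumcenter exists in $P$; here you cannot quote \cite{lang}*{Chapter XI} for the ambient $P$, so the sentence ``the bounded orbit has a unique circumcenter $a\in P$'' is unjustified as written. Incidentally, the step you single out as the main obstacle is actually the easy one: $[\,|H|^{-2}\id,|H|^{2}\id\,]=B[\id,2\log|H|]$ (as in the proof of Lemma \ref{minunit}), and Proposition \ref{geoconv} applied with one geodesic constant at $\id$ shows such balls are geodesically convex.

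The repair --- which is the actual content of \cite{miglio2} --- is to run the circumcenter argument inside a $d_2$-complete, geodesically convex, operator-norm-bounded set: norm-bounded intervals are $d_2$-complete because operator-norm balls of a finite von Neumann algebra are complete in $\|\cdot\|_2$, and on such sets $d_2$ and $\|\cdot\|_2$ are comparable. Note, however, that the interval $[\,|H|^{-2}\id,|H|^{2}\id\,]$ is not $I_H$-invariant, so a circumcenter computed inside it is not automatically fixed by $H$; you should either take the closed geodesically convex hull of $\oo_H(\id)$ (which is $I_H$-invariant, contained in the interval, hence complete) and apply the circumcenter argument there, or pass to the metric completion of $(P,d_2)$, obtain the circumcenter there, and then use your projection argument onto the complete convex interval to conclude that the circumcenter lies in $[\,|H|^{-2}\id,|H|^{2}\id\,]\subseteq P$. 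With either repair, Proposition \ref{fixed} and operator monotonicity of the square root give $s=a^{1/2}\in[\,|H|^{-1}\id,|H|\id\,]$ as claimed; without one of them, the existence of the fixed point --- the heart of the theorem --- is not established.
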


\begin{defn}
If $B\subseteq A$ is a unital inclusion of $C^*$-algebras then a map $E:A\to B$ such that $E(b)=b$ for all $b\in B$ and $\|E\|=1$ is called a \textit{\textbf{conditional expectation}}. By a result of Tomiyama \cite{tomi}, these conditions imply that $E(b_1ab_2)=b_1E(a)b_2$ and $E(a^*)=E(a)^*$ for $b_1,b_2\in B$ and $a\in A$.
\end{defn}

Assume that the group $H$ is a group of unitaries in the finite von Neumann algebra $A$. By a theorem of Takesaki \cite{takesaki}, there is a conditional expectation $E:A\to H' \cap A$ compatible with the trace, i.e. $E(\tau (x))=E(x)$ for $\in A$. This conditional expectation provides an orthogonal splitting $A=(A\cap H')\oplus_{\tau} \Ker(E)$ with respect to the inner product $\langle x,y\rangle= \tau(y^*x)$. Theorem 5.4 and Corollary 5.5 in \cite{andruchowlarotonda} in this case imply the following result.
\begin{prop}\label{minvn}
Assuming the notation of the previous paragraph, let
\begin{align*}(A_s\cap \Ker(E))\times (A_s\cap H') \xrightarrow{\sim} P,\qquad (X,Y)\mapsto e^Ye^Xe^Y\end{align*}
be the bijection given by the Porta-Recht splitting (\cite{portarecht}).\par  
Then, for $a=e^Ye^Xe^Y$, the closest point in $\exp(A_s\cap H')$ to $a$ is $e^{2Y}$, and this point is unique with this property. 
\end{prop}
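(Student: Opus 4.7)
The plan is to mirror the argument of Proposition \ref{minprop}, now exploiting the CAT(0) geometry on $P$ provided by the trace-induced Riemannian metric. Since $A\cap H'$ is a von Neumann subalgebra, $A_s\cap H'$ is a Lie triple system, and by the argument of Proposition \ref{totalgeod}, $\exp(A_s\cap H')$ is a totally geodesic, closed, and geodesically convex submanifold of $P$. The trace-preserving conditional expectation $E$ furnishes the orthogonal decomposition $A_s=(A_s\cap H')\oplus(A_s\cap\Ker E)$ with respect to the trace inner product, which is exactly the data needed for the Porta-Recht splitting \cite{portarecht} to produce the diffeomorphism $(X,Y)\mapsto e^Y e^X e^Y$.

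The crux is the minimality statement. Using formula (\ref{geodesic}), the geodesic from $e^{2Y}$ to $a=e^Y e^X e^Y$ is
\begin{align*}
\gamma(t) = e^Y\bigl(e^{-Y}e^Y e^X e^Y e^{-Y}\bigr)^t e^Y = e^Y e^{tX} e^Y,
\end{align*}
so its initial velocity at $e^{2Y}$ is $V=e^Y X e^Y$. I would then verify that $V$ is orthogonal to $T_{e^{2Y}}\exp(A_s\cap H')=e^Y(A_s\cap H')e^Y$ with respect to the Riemannian inner product $\langle V_1,V_2\rangle_{e^{2Y}}=\tau(e^{-2Y}V_1 e^{-2Y}V_2)$. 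For $W=e^Y Z e^Y$ with $Z\in A_s\cap H'$, using that $Y$ and $Z$ commute and the cyclicity of $\tau$, the inner product $\langle V,W\rangle_{e^{2Y}}$ reduces to $\tau(XZ)$; by the defining property of the trace-compatible $E$, this equals $\tau(E(X)Z)=0$ since $X\in\Ker E$.

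Finally, because $P$ is a complete CAT(0) space and $\exp(A_s\cap H')$ is a complete geodesically convex subset, the nearest-point projection onto it exists and is unique, and is characterized by the orthogonality of the initial velocity of the minimizing geodesic to the subspace; the verification above therefore identifies $e^{2Y}$ as the unique closest point. The main obstacle lies in the orthogonality computation, where the commutation $Y,Z\in H'$, the cyclicity of $\tau$, and Takesaki's characterization of the trace-compatible conditional expectation must all be combined carefully; a secondary point is justifying that the first-variation characterization of metric projection applies in this infinite-dimensional smooth Riemannian setting, which is the content of Theorem 5.4 and Corollary 5.5 in \cite{andruchowlarotonda}.
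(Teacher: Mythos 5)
Your computational core is right and in fact reconstructs the mechanism behind the result the paper simply cites: the paper gives no argument of its own, but deduces the proposition from Takesaki's theorem (existence of the trace-compatible $E$), the Porta--Recht splitting, and Theorem 5.4 and Corollary 5.5 of \cite{andruchowlarotonda}. Your identification of the geodesic $\gamma(t)=e^Ye^{tX}e^Y$, its initial velocity $e^YXe^Y$, and the orthogonality $\langle e^YXe^Y,e^YZe^Y\rangle_{e^{2Y}}=\tau(XZ)=\tau(E(X)Z)=0$ are all correct; note, though, that you do not need (and cannot in general assert) that $Y$ and $Z$ commute -- $A_s\cap H'$ is not abelian -- cyclicity of $\tau$ alone gives the reduction to $\tau(XZ)$.

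The one genuine flaw is the claim that $P$ with the trace metric is a \emph{complete} CAT(0) space. That is the Hilbert--Schmidt situation (Lemma 3.11 of \cite{larotonda}), where completeness comes from $\|\cdot\|\leq\|\cdot\|_{HS}$; in a finite von Neumann algebra the inequality goes the other way, $\|x\|_2\leq\|x\|$, and $(P,d)$ is in general not complete (this is precisely the difficulty handled in \cite{miglio2}, and why the fixed point there is located in a bounded order interval). What \cite{andruchowlarotonda} provides is nonpositive curvature (convexity of the distance, semi-parallelogram law) without completeness. Fortunately your use of completeness is inessential: existence of a closest point needs no projection theorem, since the candidate $e^{2Y}$ is explicit, and uniqueness follows from convexity of the distance along geodesics in the convex set $\exp(A_s\cap H')$. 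What does need justification is the passage from orthogonality of the initial velocity to the global inequality $d(c,a)\geq d(e^{2Y},a)$ for every $c\in\exp(A_s\cap H')$ (a first-variation argument in this infinite-dimensional, non-complete setting); as you yourself acknowledge, that is exactly the content of Theorem 5.4 and Corollary 5.5 of \cite{andruchowlarotonda} -- i.e.\ at the decisive step your argument defers to the same citation the paper uses. So: same route as the paper, with the computation usefully made explicit, but replace the completeness claim by the convexity/first-variation results of \cite{andruchowlarotonda} and drop the commutation remark.
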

The invariance of normal leaves as in Proposition \ref{inv} also holds in this case and the proof uses Proposition \ref{minvn} instead of Proposition \ref{minprop}. 
\section{Minimality properties of canonical unitarizers}\label{acsproblem}

In \cite{andcorstoj1} and \cite{andcorstoj2} Andruchow, Corach and Stojanoff studied the differential geometry of spaces of representations of some classes of $C^*$-algebras and von Neumann algebras. Using the Porta-Recht splitting \cite{portarecht} and given a certain $*$-representation $\pi_0:A\to B$, for every representation $\pi_1:A\to B$ in the $G_B$-conjugacy orbit of $\pi_0$ they constructed a canonical positive invertible $e^{-X_0}$ such that $\Ad_{e^{-X_0}}\circ \pi_1$ is a $*$-representation. In Remark 5.9 in \cite{andcorstoj1} and in \cite{andcorstoj2}*{Section 1.5} it is asked if $e^{-X_0}$, satisfies $\|e^{X_0}\|\|e^{-X_0}\|=\|\pi_1\|_{c.b.}$.\par 
We give a partial answer and a geometrical insight to this question using results from the previous chapters.\par



We briefly recall some definition and constructions in \cite{andcorstoj1}. Let $A$ be a unital $C^*$-algebra and $B\subseteq \B(\h)$ be a $C^{*}$-algebra of bounded linear operators on a separable Hilbert space $\h$. Denote by $R(A,B)$ the set of bounded unital homomorphisms from $A$ to $B$ and by $R_0(A,B)$ the subset of $*$-representations. The group $G_B$ of invertible operators in $B$ acts on $R(A,B)$ by inner automorphisms by the formula
\begin{align*}(g\cdot \pi) (a)=(\Ad_g\circ \pi) (a)=g\pi(a) g^{-1}\end{align*}
for $a\in A$ and $g\in G_B$. The group of unitary operators $U_B$ acts on $R_0(A,B)$ in the same way. In this way $R(A,B)$ and $R_0(A,B)$ are homogeneous spaces.\par 
There is also an action of $U_B$ on conditional expectations defined in $B$ and given by $u\cdot E=\Ad_u\circ E \circ \Ad_{u^{-1}}$.\par
Now, for a given $\pi_0 \in R_0(A,B)$ and a fixed conditional expectation $E_{\pi_0}:B\to \pi_0(A)'\cap B$ one obtains, by the splitting theorem of Porta and Recht (\cite{portarecht}) that for every $\pi$ in the $G$-orbit of $\pi_0$ in $R(A,B)$ there is a natural way of choosing a unique positive operator $s\in G_B$ such that $\Ad_s\circ \pi$ is a $*$-representation: if $g\in G_B$ is such that $\pi_1=\Ad_g\circ \pi_0$, the Porta-Recht splitting asserts that there are $u\in U_B$, ${Y_0}={Y_0}^*\in \pi_0(A)'\cap B$ and ${Z_0}={Z_0}^*\in \Ker(E_{\pi_0})$ such that $g=ue^{Z_0}e^{Y_0}$. Then for $a\in A$
\begin{align*}
\pi_1(a)&=ue^{Z_0}e^{Y_0}\pi_0(a)e^{-{Y_0}}e^{-{Z_0}}u^*=ue^{Z_0}\pi_0(a)e^{-{Z_0}}u^* \\
&=e^{u{Z_0}u^*}u\pi_0(a)u^*e^{-u{Z_0}u^*}=e^{\Ad_u {Z_0}}(u\cdot \pi_0)(a)e^{-\Ad_u {Z_0}}. 
\end{align*}
If we define $\rho=u\cdot \pi_0=\Ad_u\circ \pi_0$, $X_0=\Ad_u({Z_0})$ and $E_{\rho}=u\cdot E_{\pi_0}= \Ad_u\circ E_{\pi_0}\circ \Ad_{u^{-1}}$, then $\Ad_{e^{-X_0}}\circ \pi_1=\rho\in R_0(A,B)$ and $X_0\in \Ker(E_{\rho})$.  This is the $X_0$ mentioned in the question by Andruchow, Corach and Recht in the introduction to this chapter.


For the next theorem, which partially answers the question from the beginning of this chapter, we need the following result by Conde and Larotonda from \cite{condelarotonda2}*{Corollary 4.39}, which we state here in the case of operator algebras and conditional expectations.
\begin{thm}\label{minexp}
Let $A$ be a $C^*$-algebra and $B$ a $C^*$-subalgebra of $A$. Let $E:A\to B$ be a conditional expectation and let 
\begin{align*}(A_s\cap \Ker(E))\times B_s \xrightarrow{\sim} P_A,\qquad (X,Y)\mapsto e^Ye^Xe^Y \\
U_A\times (A_s\cap \Ker(E))\times B_s  \xrightarrow{\sim}G_A,\qquad (u,X,Y)\mapsto ue^Xe^Y \end{align*}
be the Porta-Recht splitting of $P_A$ and of $G_A$.\par 
Then $\|(\I-E)|_{A_s}\|=1$ if and only if for every $X\in A_s\cap \Ker(E)$ and $Y\in B_s$ a closest point in $\exp(B_s)$ to $e^Ye^Xe^Y$ is $e^{2Y}$, i.e. 
\begin{align*}\dist(\exp(B_s),e^Ye^Xe^Y)=d(e^{2Y},e^Ye^Xe^Y)=\|\log(e^X)\|=\|X\|.\end{align*}
\end{thm}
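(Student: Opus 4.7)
The plan is to first reduce to the case $Y=0$ via the isometric action, then prove the two implications separately: the forward direction via the exponential metric increasing property (Segal's inequality), and the reverse direction via a first-order expansion around $\id$ that extracts the operator norm on $A_s$ from the metric on $P_A$.

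\textbf{Reduction.} The isometry $I_{e^{-Y}}:P_A\to P_A$ from Proposition~\ref{invmetricap} sends $e^Ye^Xe^Y$ to $e^X$ and $e^{2Y}$ to $\id$, and preserves $\exp(B_s)=P_A\cap B$, because $e^{-Y}\in G_B$ and $P_A\cap B$ is invariant under $I_g$ for every $g\in G_B$. In particular $d(e^{2Y},e^Ye^Xe^Y)=d(\id,e^X)=\|X\|$ is automatic, and the trivial bound $\dist(\exp(B_s),e^X)\leq\|X\|$ comes from choosing $\id\in\exp(B_s)$. The theorem therefore reduces to the equivalence
\[
\|\I-E\|_{A_s}=1\ \Longleftrightarrow\ \dist(\exp(B_s),e^X)\geq\|X\|\ \text{for every}\ X\in A_s\cap\Ker(E).
\]

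\textbf{Forward implication.} Assuming $\|\I-E\|_{A_s}=1$, I would apply Segal's inequality at $\id$, which gives $\|X-Z\|\leq d(e^X,e^Z)$ for all self-adjoint $X,Z$. For $X\in A_s\cap\Ker(E)$ and $Z\in B_s$ one has $E(X-Z)=-Z$, so $(\I-E)(X-Z)=X$; hence
\[
\|X\|=\|(\I-E)(X-Z)\|\leq\|\I-E\|_{A_s}\|X-Z\|=\|X-Z\|\leq d(e^X,e^Z),
\]
and taking the infimum over $Z\in B_s$ yields the required lower bound.

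\textbf{Reverse implication.} Given the minimality property, I would decompose an arbitrary $a\in A_s$ as $a=X+Y$ with $X=a-E(a)\in A_s\cap\Ker(E)$ and $Y=E(a)\in B_s$, and estimate $d(e^{-tY},e^{tX})$ in two ways for small $t>0$. By definition $d(e^{-tY},e^{tX})=\|\log(e^{tY/2}e^{tX}e^{tY/2})\|$, and a symmetric Baker--Campbell--Hausdorff expansion (the quadratic terms in the product $e^Ae^Be^A$ cancel against those in $-\tfrac12 x^2$ inside $\log(1+x)$) gives $\log(e^{tY/2}e^{tX}e^{tY/2})=t(X+Y)+O(t^3)$, hence $d(e^{-tY},e^{tX})=t\|X+Y\|+O(t^3)$. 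On the other hand, the hypothesis applied to $tX\in A_s\cap\Ker(E)$ with $Y'=0$ forces $d(e^{-tY},e^{tX})\geq\dist(\exp(B_s),e^{tX})=t\|X\|$. Dividing by $t$ and sending $t\to 0^+$ produces $\|a\|\geq\|X\|=\|(\I-E)(a)\|$, so $\|\I-E\|_{A_s}\leq 1$; the reverse inequality is immediate from $(\I-E)|_{A_s\cap\Ker(E)}=\mathrm{id}$ (the degenerate case $A=B$ being trivial).

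The most delicate step is the vanishing of the $O(t^2)$ correction in the symmetric BCH product, which is what permits the purely metric minimality hypothesis to upgrade to the operator-norm estimate $\|\I-E\|\leq 1$: at first order BCH is norm-preserving, so Segal's inequality (lower bound on the metric) and the BCH expansion (upper bound at leading order) together pin down the norm on $A_s$ exactly. Everything else is bookkeeping within the framework already developed in the earlier sections.
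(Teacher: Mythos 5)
Your proof is correct, but there is nothing in the paper to compare it against: Theorem \ref{minexp} is not proved here, it is quoted from Conde and Larotonda \cite{condelarotonda2}*{Corollary 4.39}, stated in the special case of a conditional expectation. Your argument therefore supplies a self-contained proof using only tools already set up in Section \ref{prel}, and the steps check out. The reduction to $Y=0$ via the isometry $I_{e^{-Y}}$ is legitimate, since $e^{-Y}\in G_B$ and $I_{e^{-Y}}$ maps $\exp(B_s)=P_A\cap B$ bijectively onto itself, so the closest-point statement becomes the single inequality $\dist(\exp(B_s),e^X)\geq\|X\|$ for $X\in A_s\cap\Ker(E)$. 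The forward implication is exactly the exponential metric increasing property at the identity, $\|X-Z\|\leq d(e^X,e^Z)$, combined with the algebraic identity $(\I-E)(X-Z)=X$ for $Z\in B_s$; the reverse implication correctly extracts the operator-norm estimate by comparing $d(e^{-tY},e^{tX})=\|\log(e^{tY/2}e^{tX}e^{tY/2})\|=t\|X+Y\|+o(t)$ with the lower bound $t\|X\|$ coming from the hypothesis applied to $(tX,0)$ and the fact that $e^{-tY}\in\exp(B_s)$, then dividing by $t$ and letting $t\to0$. For that step the crude expansion $\log(e^{tY/2}e^{tX}e^{tY/2})=t(X+Y)+O(t^2)$ already suffices, so the cancellation of the quadratic symmetric-BCH term, while true, is not the crux you make it out to be. Two minor points: the bound $\|(\I-E)|_{A_s}\|\geq1$ needs $\Ker(E)\cap A_s\neq\{0\}$, i.e. $B\subsetneq A$; in the degenerate case $B=A$ the equivalence as literally stated fails (a defect of the statement, shared by the paper, not of your argument). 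Also note that neither direction actually uses the Porta--Recht splitting beyond the parametrization appearing in the statement, which is consistent. In spirit your route (an EMI/Segal-type inequality for one direction, an infinitesimal first-order argument for the other) matches the mechanism behind the cited general result, but specialized and verifiable entirely within this paper's framework.
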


\begin{rem}\label{normleaves}
The Porta-Recht splitting provides a global tubular neighborhood to the submanifold $\exp(B_s)=P\cap B_s=P_B$ and the normal leaves to this submanifold are the sets $e^Y\exp(A_s\cap \Ker(E))e^Y$ for $Y\in B_s$.
\end{rem}

\begin{thm}\label{thmacs}
Assuming the notation and construction of canonical unitarizers of the previous paragraph and assuming that for every unitarizable group $H<G_B\subseteq \B(\h)$
\begin{align*}\label{propsimsubalg}
\Sim_B(H)=\Sim_{\B(\h)}(H),
\end{align*}
then
\begin{align*}\|\pi_1\|_{c.b.}=\exp\left(\dist\left(e^{-2{X_0}},P^{\rho(U_A)}\right)\right)=\exp\left(\dist\left(e^{-2{X_0}},\exp(\rho(U_A)'\cap \B(\h)_s)\right)\right).\end{align*}
If $\|\I-E_{\pi_0}\|=1$ the equality $\|e^{X_0}\|\|e^{-{X_0}}\|=\|\pi_1\|_{c.b.}$ holds.
\end{thm}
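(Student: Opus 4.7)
The plan is to translate $\|\pi_1\|_{c.b.}$ into distance data on the non-positively curved cone $P$, exploit the Porta--Recht splitting inside $B$ together with Conde--Larotonda's Theorem \ref{minexp} to pin down the distance exactly when $\|\I-E_{\pi_0}\|=1$, and pass between $B$ and $\B(\h)$ via the hypothesis $\Sim_B=\Sim_{\B(\h)}$.

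First I would establish the chain of equalities. Since $\pi_1=\Ad_{e^{X_0}}\circ\rho$, one has $\pi_1(U_A)=e^{X_0}\rho(U_A)e^{-X_0}$, and Remark \ref{tranorb2} gives $P^{\pi_1(U_A)}=e^{X_0}P^{\rho(U_A)}e^{X_0}$. The map $a\mapsto e^{-X_0}ae^{-X_0}$ is an isometry by Proposition \ref{invmetricap}, sending $\id\mapsto e^{-2X_0}$ and $P^{\pi_1(U_A)}$ onto $P^{\rho(U_A)}$, so $\dist(\id,P^{\pi_1(U_A)})=\dist(e^{-2X_0},P^{\rho(U_A)})$. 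Combining the identity $\|\pi_1\|_{c.b.}=\Sim(\pi_1(U_A))$ from Proposition \ref{simcb} with $\log\Sim(H)=\dist(\id,P^H)$ from Theorem \ref{distdiam} yields the first displayed equality. The second equality is then immediate from Remark \ref{fixunit} applied to the unitary group $\rho(U_A)$.

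For the sharper conclusion under $\|\I-E_{\pi_0}\|=1$, I would work inside $B$. Since $E_\rho=\Ad_u\circ E_{\pi_0}\circ\Ad_{u^{-1}}$ and $u$ is unitary, conjugation by $u$ is isometric on $B_s$, so $\|\I-E_\rho\|_{B_s}=1$. Because $X_0\in\Ker(E_\rho)_s$ by construction, the Porta--Recht decomposition of $e^{-2X_0}$ inside $B$ is the trivial one with $Y=0$ and $X=-2X_0$. Theorem \ref{minexp} then pinpoints the closest point in $\exp((\rho(A)'\cap B)_s)$ to $e^{-2X_0}$ as $\id$, with distance $\|{-2X_0}\|=2\|X_0\|$. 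Translating back by the $B$-isometry yields $\dist_B(\id,P_B^{\pi_1(U_A)})=2\|X_0\|$, and Theorem \ref{distdiam} applied inside $B$, combined with the hypothesis $\Sim_B=\Sim_{\B(\h)}$ and Proposition \ref{simcb}, gives $\|\pi_1\|_{c.b.}=e^{2\|X_0\|}$. On the other hand, $e^{-X_0}$ is itself a positive unitarizer of $\pi_1(U_A)$, so $\|e^{X_0}\|\|e^{-X_0}\|\geq\|\pi_1\|_{c.b.}=e^{2\|X_0\|}$, while the elementary bounds $\|e^{\pm X_0}\|\leq e^{\|X_0\|}$ give the matching upper bound, forcing the desired equality.

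The main obstacle I anticipate is the correct orchestration of Theorem \ref{minexp}: one must check that the Porta--Recht decomposition really identifies $e^{-2X_0}$ with $Y=0$, and that $\|\I-E_\rho\|_{B_s}=1$ inherits from $\|\I-E_{\pi_0}\|=1$ under the unitary conjugation relating them. The role of the hypothesis $\Sim_B(H)=\Sim_{\B(\h)}(H)$ is precisely to bridge the intrinsic $B$-computation, in which $e^{-X_0}$ arises as the canonical Porta--Recht unitarizer, with the extrinsic CB-norm, a priori controlled by unitarizers in $\B(\h)$ that may lie outside $B$; without this hypothesis, a strictly smaller unitarizer from $\B(\h)\setminus B$ could a priori produce $\|\pi_1\|_{c.b.}<\|e^{X_0}\|\|e^{-X_0}\|$.
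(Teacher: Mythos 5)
Your proposal is correct and follows essentially the same route as the paper: Proposition \ref{simcb} plus Theorem \ref{distdiam}, the isometric translation by $e^{-X_0}$ (Propositions \ref{invmetricap}, \ref{tranorb}) and Remark \ref{fixunit} for the displayed equalities, then $\|\I-E_\rho\|=1$ via unitary conjugation and Theorem \ref{minexp} applied to $-2X_0\in\Ker(E_\rho)$, finished by the two-sided bound $\|e^{X_0}\|\|e^{-X_0}\|\leq e^{2\|X_0\|}$ versus $e^{-X_0}$ being a unitarizer. The only cosmetic difference is that you make the Porta--Recht decomposition of $e^{-2X_0}$ (with $Y=0$) explicit, where the paper simply invokes Theorem \ref{minexp} for $X=-2X_0$.
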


\begin{proof}
Note that 
\begin{align*}
\|\pi_1\|_{c.b.}&=\Sim_{\B(\h)}({\pi_1(U_A)}) &\mbox{  by Proposition \ref{simcb}}\\
&=\Sim_B({\pi_1(U_A)}) &\mbox{  by assumption  }\\
&=\exp(\dist(\id,P_B^{\pi_1(U_A)}))&\mbox{  by Theorem \ref{distdiam} }  \\
&=\exp(\dist(\id,P_B^{e^{X_0}\rho(U_A)e^{-{X_0}}})) &\mbox{  since $\Ad_{e^{-X_0}}\circ \pi_1=\rho$  }\\
&=\exp(\dist(\id,e^{X_0}P_B^{\rho(U_A)}e^{X_0})) &\mbox{  by Proposition \ref{tranorb}  }  \\
&=\exp(\dist(\id,I_{e^{X_0}}(P_B^{\rho(U_A)})))\\&=\exp(\dist(I_{e^{-X_0}}(\id),P_B^{\rho(U_A)})) \\
&=\exp(\dist(e^{-2{X_0}},P_B^{\rho(U_A)})) \\
&=\exp(\dist(e^{-2{X_0}},\exp(\rho(U_A)'\cap B_s))) &\mbox{  by Remark \ref{fixunit}  }.  
\end{align*}
This proves the first equality.
Further,
\begin{align*}
E_{\rho}= \Ad_u\circ E_{\pi_0}\circ \Ad_{u^{-1}}: B\to \Ad_u(\pi_0(U_A)'\cap B)=\Ad_u(\pi_0(U_A))'\cap B=\rho(U_A)'\cap B.
\end{align*}
If $\|\I-E_{\pi_0}\|=1$, then because $\|\Ad_u\|=1$ 
\begin{align*}
\|\I-E_{\rho}\|&=\|\I-\Ad_u\circ E_{\pi_0}\circ \Ad_{u^{-1}}\|=\|\Ad_u\circ (\I-E_{\pi_0})\circ \Ad_{u^{-1}}\|\\
&\leq \|\Ad_u\|\|I-E_{\pi_0}\|\|\Ad_{u^{-1}}\|=1,
\end{align*}
so that $\|\I-E_{\rho}\|=1$. Therefore by Theorem \ref{minexp}, 
\begin{align*}\dist(\exp(\rho(U_A)'\cap B_s),e^X)=d(\id,e^X)=\|X\|\end{align*}
for all $X\in \Ker(E_{\rho})$. Hence, since $X_0\in \Ker(E_{\rho})$ 
\begin{align*}\|\pi_1\|_{c.b.}=\exp(\dist(e^{-2{X_0}},\exp(\rho(U_A)'\cap B_s)))=e^{\|2X_0\|}.\end{align*} 
We get $\|e^{X_0}\|\|e^{-{X_0}}\|\leq e^{\|{X_0}\|}e^{\|{-X_0}\|}=e^{\|2X_0\|}=\|\pi_1\|_{c.b.}$. Since $\Ad_{e^{-X_0}}\circ \pi_1$ is a $*$-homomorphism we also get $\|\pi_1\|_{c.b.}=\Sim_{\B(\h)}({\pi_1(U_A)})\leq \|e^{X_0}\|\|e^{-{X_0}}\|$.
\end{proof}

Note in the proof of the last theorem that \begin{align*}\|\pi_1\|_{c.b.}= \Sim_B({\pi_1(U_A)})= \exp\left(\dist\left(\id,P_B^{\pi_1(U_A)}\right)\right)\end{align*} and that a positive invertible $s\in B$ such that $\Ad_{s^{-1}}\circ \pi_1$ is a star representation corresponds to an $s^2\in P_B^{\pi_1(U_A)}$. After applying the isometric translation $I_{e^{-X_0}}$ to the point $\id$ and the set $P_B^{\pi_1(U_A)}$ we get the point $e^{-2{X_0}}$ and the set $P_B^{\rho(U_A)}=\exp(\rho(U_A)'\cap B_s)$. The point $e^{-2{X_0}}$ is in the leaf $\exp(\Ker(E_{\rho}\cap B_s))$ normal to the manifold $\exp(\rho(U_A)'\cap B_s))$, which contains $\id$. Therefore $\id$ is the result of projecting $e^{-2{X_0}}$ to the manifold $\exp(\rho(U_A)'\cap B_s))$ and if we apply $I_{e^{X_0}}$ we see that $e^{2X_0}$ is the result of projecting $\id$ to $P_B^{\pi_1(U_A)}$.\par 
Hence, we translated the question by Andruchow, Corach and Stojanoff into the following: Under what conditions does the point $e^{2X_0}$ minimize the distance to $\id$ among the points in $P_B^{\pi_1(U_A)}$?

\begin{rem}
Note that if $B=\oplus_{n\in \N}\B(\h_n)\subseteq \B(\h)=\B(\oplus_{n\in \N}\h_n)$ then by Proposition \ref{diagsim}, $\Sim_B(H)=\Sim_{\B(\h)}(H)$ so that $B\subseteq \B(\h)$ satisfies the condition in Theorem \ref{thmacs}.
\end{rem}

We next give an example of a conditional expectation $E$ satisfying $\|\I-E\|=1$.

\begin{ex}
Consider the Hilbert space direct sum $\h=\oplus_{n\in \N}\h_n$, the algebra $\B(\h)$ and its diagonal subalgebra $A=\oplus_{n\in \N}\B(\h_n)$. For each $n\in \N$ let $p_n$ be an orthogonal projection in $\B(\h_n)$. Let $A_n=\{X\in \B(\h_n):p_nX=Xp_n\}$ and $E_n:\B(\h_n)\to A_n$, $X\mapsto p_nXp_n+(\id-p_n)X(\id-p_n)$ be a conditional expectation as defined above.\par 
Then, $E=\oplus_{n\in \N}E_n:\oplus_{n\in \N}\B(\h_n)\to \oplus_{n\in \N}A_n$ is a conditional expectation such that $\|\I-E\|=1$. This follows from the fact that each conditional expectation $E_n$ satisfies $\|\I_n-E_n\|=1$. To see this note that $q_n=2p_n-\id$ is a self-adjoint unitary in $\B(\h_n)$ so that for every $X\in \B(\h_n)$
\begin{align*}\|(\I_n-E_n)(X)\|=\|X-E_n(X)\|=\|X-\tfrac12(X+q_nXq_n)\|=\|\tfrac12(X-q_nXq_n)\|\leq \|X\|.\end{align*}


\end{ex}

\begin{bibdiv}
\begin{biblist}
\bib{andcorstoj1}{article}{author={E. Andruchow},author={G. Corach},author={D. Stojanoff}, title={A geometric characterization of nuclearity and injectivity},journal={J. Funct. Anal.},volume={133},date={1995},pages={no. 2, 474--494}}


\bib{andcorstoj2}{article}{
author={E. Andruchow},
author={G. Corach},
author={D. Stojanoff}, 
title={The homogeneous space of representations of a nuclear $C\sp *$-algebra},
journal={Harmonic analysis and operator theory (Caracas, 1994), Contemp. Math.}, 
number={189}, 
publisher={Amer. Math. Soc., Providence, RI}, 
year={1995},
pages={37--53}
}

\bib{acms}{article}{author={E. Andruchow}, author={G. Corach},author={M. Milman}, author={D. Stojanoff}, title={Geodesics and interpolation}, journal={Rev. Un. Mat. Argentina}, volume={40},date={1997}, pages={no. 3-4, 83--91}}

\bib{andruchowlarotonda}{article}{author={E. Andruchow}, author={G. Larotonda}, title={Nonpositively Curved Metric in the Positive Cone of a Finite von Neumann Algebra}, journal={J. London Math. Soc.}, volume={74}, date={2006}, pages={no. 1, 205-218}}

\bib{bruhattits}{article}{author={F. Bruhat}, author={J. Tits}, title={Groupes r\'eductifs sur un corps local, I. Donn\'ees radicielles valu\'ees}, journal={Inst. Hautes \'Etudes Sci. Publ. Math.}, volume={41}, date={1972}, pages={5-252}}


\bib{condelarotonda2}{article}{author={C. Conde}, author={G. Larotonda}, title={Manifolds of semi-negative curvature}, journal={Proc. Lond. Math. Soc.}, number={3}, date={2010}, pages={no. 3, 670--704}}


\bib{correcht}{article}{
   author={G. Corach},
   author={H. Porta},
   author={L. Recht},
   title={Convexity of the geodesic distance on spaces of positive operators},
   journal={Illinois journal of mathematics},
   volume={38},
   date={1994},
   pages={87-94},
 	}

\bib{cpr}{article}{author={G. Corach},author={H. Porta},author={L. Recht},title={A geometric interpretation of Segal's inequality $\Vert e\sp {X+Y}\Vert \leq\Vert e\sp {X/2}e\sp Ye\sp {X/2}\Vert $},journal={ Proc. Amer. Math. Soc.},volume={115},date={1992},pages={no. 1, 229--231}}

\bib{cpr3}{article}{author={G. Corach},author={H. Porta},author={L. Recht},title={Geodesics and operator means in the space of positive operators}, journal={Internat. J. Math.}, number={4}, date={1993}, pages={no. 2, 193--202}}


\bib{davidson}{book}{author={K. R. Davidson}, title={$C\sp *$-algebras by example.}, series={Fields Institute Monographs}, publisher={American Mathematical Society, Providence, RI}, date={1996}}



\bib{day}{article}{
    AUTHOR = {Day, M.},
     TITLE = {Means for the bounded functions and ergodicity of the bounded
              representations of semi-groups},
   JOURNAL = {Trans. Amer. Math. Soc.},
    VOLUME = {69},
      YEAR = {1950},
     PAGES = {276--291},
  }

\bib{dix}{article}{
   author={Dixmier, J.},
   title={Les moyennes invariants dans les semi-groups et leurs applications},
   journal={Acta Sci. Math. Szeged},
   date={1950},
   number={12},
   pages={213-227},}
\bib{ehrenp}{article}{
AUTHOR = {Ehrenpreis, L.},
AUTHOR = {Mautner, F.},
     TITLE = {Uniformly bounded representations of groups},
   JOURNAL = {Proc. Nat. Acad. Sci. U. S. A.},
     VOLUME = {41},
      YEAR = {1955},
     PAGES = {231--233}}
\bib{epm}{article}{
    AUTHOR = {Epstein, I.},author={Monod, N.},
     TITLE = {Nonunitarizable representations and random forests},
   JOURNAL = {Int. Math. Res. Not. IMRN},
      YEAR = {2009},
    NUMBER = {22},
     PAGES = {4336--4353},
  }
\bib{fack}{article}{author={T. Fack}, title={A Dixmier's theorem for finite type representations of amenable semigroups},journal={Math. Scand.},number={93}, date={2003}, pages={no. 1, 136--160}}

\bib{haag}{article}{author={U. Haagerup}, title={Solution of the similarity problem for cyclic representations of $C\sp{\ast} $-algebras}, journal={Ann. of Math.}, number={118}, date={1983}, pages={no. 2, 215--240}}


\bib{lang}{book}{author={S. Lang}, title={Fundamentals of Differential Geometry}, series={Graduate Texts in Mathematics}, publisher={Springer-Verlag, New York}, date={1999}}

\bib{larotonda}{article}{author={G. Larotonda}, title={Non-positive curvature: a geometrical approach to Hilbert-Schmidt operators}, journal={Differential Geom. Appl.}, number={25}, date={2007}, pages={no. 6, 679--700}}

\bib{miglio2}{article}{author={M. Miglioli}, title={Unitarization of uniformly bounded subgroups in finite von-Neumann algebras}, journal={Bull. London Math. Soc.}, date={2014}, number={46}, pages={1264-1266}}

\bib{naka}{article}{
    AUTHOR = {Nakamura, M.},author={Takeda, Z.},
     TITLE = {Group representation and {B}anach limit},
   JOURNAL = {T\^ohoku Math. J. (2)},
      VOLUME = {3},
      YEAR = {1951},
     PAGES = {132--135},
  }

\bib{neeb}{article}{author={K.-H. Neeb}, title={A Cartan-Hadamard Theorem for Banach-Finsler manifolds}, journal={Geom. Dedicata}, number={95}, date={2002}, pages={115-156}}

\bibitem{osawa}N. Oszawa, {\it An invitation to the similarity problems}. available from \href{http://www.kurims.kyoto-u.ac.jp/~narutaka/notes/similarity.pdf}{http://www.kurims.kyoto-u.ac.jp/~narutaka/notes/similarity.pdf}

\bib{paulsen}{book}{author={V. Paulsen}, title={Completely bounded maps and operator algebras}, series={Cambridge Studies in Advanced Mathematics}, number={78}, publisher={Cambridge University Press, Cambridge}, date={2002}}

\bib{pisier1}{article}{author={G. Pisier}, title={A similarity degree characterization of nuclear $C\sp *$-algebras}, journal={Publ. Res. Inst. Math. Sci.}, number={42}, date={2006}, pages={no. 3, 691--704}}

\bib{pisier4}{book}{author={G. Pisier}, title={Similarity problems and completely bounded maps}, note={Second, expanded edition. Includes the solution to "The Halmos problem''}, series={Lecture Notes in Mathematics}, number={1618}, publisher={Springer-Verlag, Berlin}, date={2001}}

\bib{pisier3}{article}{author={G. Pisier}, title={The similarity degree of an operator algebra}, journal={Algebra i Analiz}, number={10}, date={1998}, pages={no. 1, 132--186}, note={translation in St. Petersburg Math. J. 10 (1999), no. 1, 103--146}}

\bib{pisier2}{article}{author={G. Pisier}, title={The similarity degree of an operator algebra}, journal={II. Math. Z.}, number={234}, date={2000}, pages={no. 1, 53--81}}

\bib{portarecht}{article}{author={H. Porta}, author={L. Recht}, title={Conditional expectations and operator decompositions}, journal={Ann. Global Anal. Geom.}, number={12}, date={1994}, pages={335--339}}

\bib{pytls}{article}{
    AUTHOR = {Pytlik, T.},
Author={Szwarc, R.},
     TITLE = {An analytic family of uniformly bounded representations of
              free groups},
   JOURNAL = {Acta Math.},
      VOLUME = {157},
      YEAR = {1986},
    NUMBER = {3-4},
     PAGES = {287--309},
  }

\bib{schlicht}{thesis}{
    AUTHOR = {Schlicht, P.},
     TITLE = {Amenable groups and a geometric view on unitarisability},
  SCHOOL={Universit\"at Leipzig},
  YEAR={2014},
  NOTE={available online: \href{http://nbn-resolving.de/urn:nbn:de:bsz:15-qucosa-132865}{http://nbn-resolving.de/urn:nbn:de:bsz:15-qucosa-132865}}
   }


\bib{takesaki}{article}{author={M. Takesaki}, title={Conditional expectations in von Neumann algebras}, journal={J. Functional Analysis}, number={9}, date={1972}, pages={306--321}}

\bib{tomi}{article}{author={J. Tomiyama}, title={On the projection of norm one in $W^*$-algebras},journal={Proc. Japan Acad.}, number={33}, date={1957}, pages={608--612}}

\end{biblist}
\end{bibdiv}

\noindent
\end{document}